\newcommand{\abs}[1]{{\left\lvert{#1}\right\rvert}}
\newcommand{\norm}[1]{{\left\lVert{#1}\right\rVert}}
\newcommand{\br}[1]{{\left({#1}\right)}}
    \def\XXint#1#2#3{{\setbox0=\hbox{$#1{#2#3}{\int}$}
    \vcenter{\hbox{$#2#3$}}\kern-.5\wd0}}
\def \E{ \mathbb{E} }
\def \R{ \mathbb{R} }
\def \N{ \mathbb{N} }
\def \M { \mathcal{M} }
\def \Int{\,\Int\,}
\renewcommand{\Int}{{\rm Int}\,}
\renewcommand{\chi}{{\bf 1}}
\theoremstyle{plain}
\newtheorem{theorem}{Theorem}
\newtheorem{lemma}[theorem]{Lemma}
\newtheorem{corollary}[theorem]{Corollary}
\theoremstyle{definition}
\theoremstyle{remark}
\newtheorem{remark}[theorem]{Remark}
\numberwithin{equation}{section}
\numberwithin{theorem}{section}
\author{Li Chen}
\address{Li Chen, Instituto de Ciencias Matem\'aticas CSIC-UAM-UC3M-UCM, Consejo Superior de Investigaciones Cient{\'\i}ficas,
C/ Nicol\'as Cabrera, 13-15, E-28049 Madrid, Spain} 
\email{li.chen@icmat.es}
\author{Thierry Coulhon}
\address{Thierry Coulhon,  PSL  Research University,  F-75005 Paris, France}
\email{thierry.coulhon@univ-psl.fr}
\author{Joseph Feneuil}
\address{Joseph Feneuil, Department of mathematics, University of Minnesota, USA}
\email{jfeneuil@umn.edu}
\author{Emmanuel Russ}
\address{Emmanuel Russ, Institut Fourier, UMR 5582, Grenoble Alpes University, France}
\email{emmanuel.russ@ujf-grenoble.fr}
\begin{document}
\title{Riesz transform for $1\leq p \le  2$ without Gaussian heat kernel bound
{\small }}

\keywords{Riemannian manifolds, graphs, Riesz transforms, heat kernel, sub-Gaussian estimates.}

\thanks{The first author was partially supported by ICMAT Severo Ochoa project SEV-2011-0087 and she acknowledges that the research leading to these results has received funding from the European Research Council under the European Union's Seventh Framework Programme (FP7/2007-2013)/ ERC agreement no. 615112 HAPDEGMT. The third and fourth authors were partially supported by the French ANR project ``HAB'' no.~ANR-12-BS01-0013.}

\date{\today}

\begin{abstract}
We study the $L^p$ boundedness of Riesz transform as well as the reverse inequality on Riemannian  manifolds and graphs under the volume doubling property and 
a sub-Gaussian heat kernel upper bound. 
We prove that  the Riesz transform is  then  bounded on $L^p$  for $1<p<2$, which  shows that Gaussian estimates of the heat kernel are not a necessary condition for this.
In the particular case of Vicsek manifolds and graphs, we show that the reverse inequality does not hold for $1<p<2$. This yields a full picture of the ranges of $p\in (1,+\infty)$ for which respectively the  Riesz transform is $L^p$ -bounded and  the reverse inequality holds on $L^p$  on such manifolds and graphs. This picture is strikingly different from the Euclidean one.  
 
\end{abstract}

\maketitle

\tableofcontents

\section{Introduction}

Let $M$ be a complete connected non-compact Riemannian manifold. Let $d$ be the geodesic distance and $\mu$ be the Riemannian measure.  Denote by $B(x,r)$ the  open  ball of center $x$ and of geodesic radius $r$. We write $V(x,r)$ for $\mu(B(x,r))$.
Let $\nabla$ be the Riemannian gradient and $\Delta$ be the non-negative Laplace-Beltrami operator on $M$. 
Denote by $(e^{-t\Delta})_{t>0}$ the heat semigroup associated with $\Delta$ and $h_t(x,y)$ the associated heat kernel. It is well-known that
$h_t(x,y)$ is everywhere positive, symmetric in $x,y\in M$ and smooth in $t>0, x,y\in M$ (see for instance \cite[Theorem 7.13]{Gr09}).
For $f \in \mathcal C_0^{\infty}(M)$,  denote by  $|\nabla f|$ the Riemannian length of  $\nabla f$. From the definition and  spectral theory, it holds that for all $f \in \mathcal C_0^{\infty}(M)$,
\begin{equation}\label{2}
\||\nabla f|\|_2^2 =(\Delta f,f)=\norm{\Delta^{1/2} f}_2^2.
\end{equation}

In the case where $M=\R^n$ endowed with the Euclidean metric, it is well-known that this equality extends to $L^p$, $1<p<+\infty$, in the form of an equivalence of seminorms
\begin{equation} \tag{E$_p$} \label{Ep} \norm{|\nabla f|}_p\simeq\norm{\Delta^{1/2} f}_p, \end{equation}
for $f\in\mathcal{C}_0^{\infty}(\R^n)$. Here and in the sequel we use the notation  $u\simeq v$ if $u\lesssim v$ and $v\lesssim u$, where $u\lesssim v$ means that there exists a constant $C$ (independent of the important parameters) such that $u\leq Cv$.\

It was asked by Strichartz \cite{Str83} in 1983 on which non-compact Riemannian  manifolds  $M$, and for which $p$, $1<p<+\infty$, 
the equivalence of seminorms \eqref{Ep} still holds on $\mathcal{C}_0^{\infty}(M)$. 
Let us distinguish  between the two inequalities involved in \eqref{Ep} and introduce a specific notation.

The first one 
\begin{equation} \label{Rp} \tag{$R_p$}
\norm{|\nabla f|}_p \lesssim\norm{\Delta^{1/2} f}_p, \,\,\forall f \in \mathcal C_0^{\infty}(M),
\end{equation}
can be reformulated by saying  that the Riesz transform $\nabla\Delta^{-1/2}$ is $L^p$ bounded on $M$.
We will refer to the second one
\begin{equation} \label{RRp} \tag{$RR_p$}
\norm{\Delta^{1/2} f}_p \lesssim \norm{|\nabla f|}_p, \,\,\forall f \in \mathcal C_0^{\infty}(M),
\end{equation}
as the reverse inequality.

It is well-known (see for example \cite[Proposition 2.1]{CD03}) that by duality (\ref{Rp}) implies $(RR_{p'})$, where $p'$ is the conjugate exponent  of $p$.

\medskip
We say that $M$ satisfies the  volume  doubling property if, for any $x\in M$ and $r>0$,
\begin{equation}\label{doubling}
V(x,2r)\lesssim V(x,r).\tag{$D$}
\end{equation}

 The  key result on (\ref{Rp}) in the case  $1<p<2$ is the following one.

\begin{theorem}\cite[Theorem 1.1]{CD99} \label{thm-cd}
Let $M$ be a complete non-compact Riemannian manifold satisfying the volume  doubling property (\ref{doubling}) and the heat kernel  upper estimate 
\begin{equation} \label{UE} \tag{$U\!E$}
h_t(x,y)\lesssim \frac{1}{V(x,\sqrt t)} \exp\left(-c\frac{d^2(x,y)}{t}\right),\,\,\forall x,y\in M,\, t>0,
\end{equation}
for some $c>0$.
Then (\ref{Rp}) holds for $1<p<2 $.
\end{theorem}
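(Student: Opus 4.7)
The plan is to establish \eqref{Rp} for $1<p<2$ via the Calder\'on--Zygmund method: prove the weak-type $(1,1)$ endpoint bound for $T = \nabla\Delta^{-1/2}$ on the space of homogeneous type $(M,d,\mu)$, and then interpolate with the trivial $L^2 \to L^2$ bound furnished by \eqref{2}. Marcinkiewicz interpolation then yields boundedness on $L^p$ for every $1<p<2$, which is exactly \eqref{Rp}.

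The main analytic input I would extract from \eqref{UE} is a family of Davies--Gaffney type $L^2$ off-diagonal estimates for the gradient heat semigroup: for any closed sets $E,F\subset M$ and any $f\in L^2(M)$ supported in $E$,
\begin{equation*}
\norm{\sqrt{t}\,\nabla e^{-t\Delta} f}_{L^2(F)} \lesssim \exp\!\br{-c\,\frac{d(E,F)^2}{t}}\norm{f}_{L^2(E)}.
\end{equation*}
The global $L^2\to L^2$ bound $\norm{\sqrt{t}\,\nabla e^{-t\Delta}}_{2\to 2}\lesssim 1$ follows from analyticity of the heat semigroup combined with \eqref{2}; the Gaussian off-diagonal improvement is then obtained by Davies's exponential perturbation method, in which one conjugates by $e^{\lambda\rho}$ for a bounded Lipschitz approximation $\rho$ of $d(\cdot,E)$ and optimizes in $\lambda>0$. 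Observe that no pointwise estimate on $\nabla_x h_t(x,y)$ is required; \eqref{UE} is used only through this $L^2$ off-diagonal inequality.

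For the weak-type $(1,1)$ bound, apply the Calder\'on--Zygmund decomposition adapted to the doubling space $(M,d,\mu)$ using \eqref{doubling}: at level $\alpha>0$, write $f = g + \sum_i b_i$ with $\norm{g}_\infty \lesssim \alpha$, each $b_i$ supported in a ball $B_i = B(x_i,r_i)$, $\int b_i\,d\mu = 0$, $\norm{b_i}_1 \lesssim \alpha\,\mu(B_i)$, and $\sum_i \mu(B_i) \lesssim \norm{f}_1/\alpha$. The good part $g$ is controlled by Chebyshev's inequality together with the $L^2$-boundedness of $T$. For the bad part, I would use the subordination formula $\Delta^{-1/2} = \pi^{-1/2}\int_0^\infty e^{-t\Delta}\,dt/\sqrt{t}$ and split, for each $i$,
\begin{equation*}
T b_i = T\br{I - e^{-r_i^2\Delta}} b_i + \nabla\Delta^{-1/2}\, e^{-r_i^2\Delta} b_i.
\end{equation*}
On the dyadic annuli $C_k(B_i) = B(x_i, 2^{k+1}r_i)\setminus B(x_i, 2^k r_i)$ the first piece is estimated by writing it as the $t$-integral of $\sqrt{t}\,\nabla e^{-t\Delta} b_i$ for $0<t<r_i^2$ and applying the Gaffney estimate on each annulus; the second piece benefits from the rapid Gaussian off-diagonal decay of $\nabla\Delta^{-1/2}e^{-r_i^2\Delta}$, obtained by integrating the Gaffney bounds against $dt/\sqrt{t}$ over $t\geq r_i^2$. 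Summation over $i$ and $k$, using \eqref{doubling} in the form $\mu(2^{k+1}B_i)/\mu(B_i)\lesssim 2^{kN}$, yields the claimed weak-$(1,1)$ bound.

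The main obstacle is the fine interplay, in the bad-part estimate, between the time integration coming from $\int_0^\infty \nabla e^{-t\Delta}\,dt/\sqrt{t}$ and the annular decomposition: the Gaussian factor $\exp(-c(2^k r_i)^2/t)$ must be integrated in $t$ against $dt/\sqrt{t}$ and then summed against the doubling volume growth $2^{kN}$, with an $L^2$ control on $b_i$ giving only $\norm{b_i}_2 \lesssim \alpha^{1/2}\norm{b_i}_1^{1/2}$. Matching these requires splitting the $t$-integration at $r_i^2$ and being precise about constants; this is exactly the point where the Gaussian (as opposed to sub-Gaussian) shape of \eqref{UE} is used, and one of the contributions of the present paper is to show that the full Gaussian bound is in fact not strictly necessary.
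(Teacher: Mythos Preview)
Your overall architecture is the same as the paper's (and \cite{CD99}'s): weak $(1,1)$ via Calder\'on--Zygmund, the splitting $Tb_i=T(I-e^{-r_i^2\Delta})b_i+Te^{-r_i^2\Delta}b_i$, and interpolation with \eqref{2}. The first piece, outside $2B_i$, is handled in the paper exactly by the integrated kernel estimate $\int_{d(x,y)\ge r}|\nabla_x h_t(x,y)|\,d\mu(x)\lesssim t^{-1/2}e^{-cr^2/t}$, which in the Gaussian case follows from \eqref{UE} by an elementary integration by parts; your Gaffney route is a legitimate $L^2$ variant of the same idea, and the paper even notes (Remark in Section~\ref{weak11}) that the Blunck--Kunstmann/Auscher machinery could be substituted here.

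The real problem is with your treatment of the second piece $Te^{-r_i^2\Delta}b_i$. You propose to obtain ``rapid Gaussian off-diagonal decay of $\nabla\Delta^{-1/2}e^{-r_i^2\Delta}$'' by integrating the Gaffney bound for $\sqrt t\,\nabla e^{-t\Delta}$ against $dt/\sqrt t$ for $t\ge r_i^2$. That integral is
\[
\int_{r_i^2}^\infty \frac{1}{\sqrt t}\,e^{-c(2^kr_i)^2/t}\,\frac{dt}{\sqrt{t-r_i^2}},
\]
and it \emph{diverges} at $t\to\infty$ (the Gaussian factor tends to $1$ and one is left with $\int^\infty dt/t$). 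So no $L^2\to L^2$ off-diagonal bound on $T e^{-r_i^2\Delta}$ falls out of Gaffney alone. This is not a technicality: the Davies--Gaffney estimate for $\sqrt t\,\nabla e^{-t\Delta}$ holds on \emph{every} complete manifold, with no reference to \eqref{UE}, so your sentence ``\eqref{UE} is used only through this $L^2$ off-diagonal inequality'' actually means \eqref{UE} is not used at all in your outline --- and a proof of \eqref{Rp} for $1<p<2$ with no hypothesis beyond completeness would settle the conjecture mentioned in the introduction.

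The paper (following \cite{CD99}) handles $Te^{-t_i\Delta}b_i$ differently: one does \emph{not} seek off-diagonal decay of the composite operator, but uses the $L^2$-boundedness of $T$ to reduce to bounding $\bigl\|\sum_i e^{-t_i\Delta}b_i\bigr\|_2$, and then a duality argument together with the pointwise bound $e^{-t\Delta}|\phi|(y)\lesssim \mathcal M\phi(y)$ for $y\in B(x,r)$, $t=r^2$. It is precisely this last step that consumes \eqref{UE}. If you want to stay in the $L^2$-off-diagonal framework, the assumption you need from \eqref{UE} is an $L^1\to L^2$ (equivalently $L^2\to L^\infty$) off-diagonal estimate for $e^{-t\Delta}$, not the Gaffney estimate for $\nabla e^{-t\Delta}$; with that in hand the Blunck--Kunstmann argument goes through.
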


Note that one does not know any  complete Riemannian manifold where  (\ref{Rp}) does not hold for some  $p\in(1,2)$. It is therefore very natural to ask whether the above assumptions are necessary or not.
It is clear that \eqref{doubling} is not. For instance, as soon as the manifold has a spectral gap (and satisfies local volume doubling and small time gaussian estimates), it is easy to see that  \eqref{Rp} holds (see \cite[Theorem 1.3]{CD99}); the presence of a spectral gap is clearly incompatible with global volume doubling. Moreover it is know that \eqref{Rp} holds for all $p\in (1,2)$ on some specific Lie groups of exponential growth (see for instance  \cite{Sjo99,SV08}).
Yet, if \eqref{doubling} is assumed, are Gaussian estimates of the heat kernel necessary for the Riesz transform to be $L^p$-bounded for $1< p< 2$?  As  a matter of fact, no examples  were  known so far  where (\ref{Rp}) holds for $1<p<2$ and \eqref{doubling} holds without \eqref{UE} holding as well.

The main goal of the present paper is to give a negative answer  to the above question.
We will deduce this from a positive result: 
we  will in fact establish that
 (\ref{Rp})  also holds for all $p\in (1,2)$ if we replace (\ref{UE}) in the assumptions of Theorem \ref{thm-cd} by a
 so-called sub-Gaussian heat kernel  upper estimate. 
 The conclusion follows because  there exist  manifolds that satisfy  such sub-Gaussian upper estimates as well as the matching lower estimates and the latter are clearly incompatible with \eqref{UE} (see the Appendix below for both facts).
 As a by-product, this provides new classes of examples where  (\ref{Rp}) does hold for  $p\in (1,2)$.
 
 Note that the question remains whether any heat kernel estimate at all is necessary. A bold conjecture has been made in this direction  in   \cite[Conjecture 1.1]{CD03}. 
 
 Let $m>2$.  One says that $M$ satisfies the sub-Gaussian heat kernel  upper  estimate with  exponent $m$  if for any $x,y \in M$,
\begin{equation}\label{B}\tag{$U\!E_m$}
 h_{t}(x,y) \lesssim\left\{ \begin{aligned}
& \frac{1}{V(x,\sqrt t)}\exp\br{-c\frac{d^{2}(x,y)}{t}}, &0<t<1,\\
& \frac{1}{V(x,t^{1/m})}\exp\br{-c \br{\frac{d^{m}(x,y)}{t}}^{1/(m-1)}}, &t\geq 1,
\end{aligned}\right.
\end{equation}
for some $c>0$.

We refer to \cite[Section 5]{HSC01} for a detailed introduction  of a class of non-classical heat kernel estimates that includes this one.

Fractal manifolds are typical examples that satisfy sub-Gaussian heat kernel estimates; they are built from graphs with a self-similar structure at infinity by replacing the edges of the graph with tubes of length $1$ and then gluing the tubes together smoothly
at the vertices (see \cite{BCG01}). It has been proved in \cite{Ba04} that for any $D,m\in \R$ such that $D> 1$ and  $2< m\leq D+1$, there exists an infinite
connected locally finite graph with polynomial growth of exponent $D$ satisfying  the discrete analogue of (\ref{B}) (see Section \ref{Rieszgraph} for a precise definition).  It follows that for any $D,m\in \R$ such that $D> 1$ and  $2< m\leq D+1$, there exist complete connected Riemannian manifolds satisfying $V(x,r) \simeq r^D$ for $r\geq 1$ and \eqref{B} (see the Appendix below).

  The article  \cite{F15}  by the third author is the main source of inspiration for the present paper. It treats the  analogous  problem in a discrete setting:   \cite[Corollary 1.30]{F15}  states  that the  discrete  Riesz transform is $L^p$ bounded for $1<p<2$ on graphs satisfying the  volume doubling property and  a  sub-Gaussian  upper estimate for the Markov kernel. The author introduces a Hardy space theory for functions and $1-$forms on graphs  and obtains the $H^1$-boundedness of the Riesz transform. Then its  $L^p$-boundedness for $1<p<2$  follows  by interpolation  between $H^1$ and $L^2$.   We rely  on a crucial idea from \cite{F15}, namely a new way to use  a well-known and very powerful trick by Stein (\cite[Chapter 2, Lemma 2]{St70}; for other utilizations of the argument in our subject see \cite[Theorem 1.2]{CDL}   and \cite[Proposition 1.8]{ChThese}). Instead of introducing $H^1$ spaces as in \cite{F15}, we follow a more direct route  and prove as in \cite{CD99}  that the Riesz transform is  weak $(1,1)$ by adapting the method to the sub-Gaussian case as in \cite{Ch15}. This proof also works in the discrete case and gives a simpler proof of the  case $1<p<2$  in  \cite[Corollary 1.30]{F15}.

Our main result is:
\begin{theorem}\label{main}
Let $M$ be a complete non-compact Riemannian manifold satisfying the doubling property (\ref{doubling}) and the sub-Gaussian heat kernel upper 
bound (\ref{B}) for some $m>2$. Then the Riesz transform is weak $(1,1)$ and $L^p$ bounded for $1<p\leq 2$.
\end{theorem}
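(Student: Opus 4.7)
The $L^2$-boundedness of $\nabla \Delta^{-1/2}$ is immediate from \eqref{2}, so by Marcinkiewicz interpolation it suffices to prove the weak $(1,1)$ estimate. I would follow the Calder\'on--Zygmund strategy of \cite{CD99}, adapted to the sub-Gaussian scaling $t \mapsto t^{1/m}$ as in \cite{Ch15}, and incorporate the Stein-type trick of \cite{F15} to handle the long-time part.

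Fix $\alpha > 0$ and $f \in L^1 \cap L^2$, and use \eqref{doubling} to perform a Calder\'on--Zygmund decomposition at level $\alpha$: $f = g + \sum_i b_i$ with $\norm{g}_\infty \lesssim \alpha$, each $b_i$ supported in a ball $B_i = B(x_i, r_i)$, $\int b_i\, d\mu = 0$, $\norm{b_i}_1 \lesssim \alpha V(x_i, r_i)$, and $\sum_i V(x_i, r_i) \lesssim \alpha^{-1}\norm{f}_1$ with bounded overlap. The good part is controlled by Chebyshev together with the $L^2$-bound, since $\norm{g}_2^2 \lesssim \alpha \norm{f}_1$. Setting $\Omega := \bigcup_i \lambda B_i$ for a fixed large $\lambda$, \eqref{doubling} gives $\mu(\Omega) \lesssim \alpha^{-1}\norm{f}_1$, so it remains to bound $\sum_i \int_{M \setminus \lambda B_i} |\nabla \Delta^{-1/2} b_i|\, d\mu$ by a multiple of $\norm{f}_1$.

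For each $i$, I would insert the subordination identity $\nabla \Delta^{-1/2} b_i = \pi^{-1/2}\int_0^\infty \nabla e^{-t\Delta} b_i \, t^{-1/2}\, dt$ and split at the natural sub-Gaussian scale $t_i := r_i^m$, dictated by the on-diagonal bound $h_t(x,x) \lesssim 1/V(x, t^{1/m})$ for $t\geq 1$ in \eqref{B}. The short-time piece $\int_0^{t_i}$ is treated on $M \setminus \lambda B_i$ by means of Davies--Gaffney type $L^2$ off-diagonal estimates for $\nabla e^{-t\Delta}$, which follow from \eqref{B} and \eqref{2} by the standard Davies perturbation method; their sub-Gaussian decay in $d(\cdot,B_i)/t^{1/m}$, combined with Cauchy--Schwarz in annuli around $B_i$ and \eqref{doubling}, yields a contribution of order $\alpha V(x_i, r_i)$ per index $i$, which sums correctly.

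The long-time piece $\int_{t_i}^\infty$ is what I expect to be the main technical obstacle: the naive bound $\norm{\nabla e^{-t\Delta}}_{2\to 2} \lesssim t^{-1/2}$ coming from \eqref{2} makes the resulting $t$-integral divergent, so the cancellation $\int b_i\, d\mu = 0$ must be exploited. This is precisely the role of Stein's trick as imported into this problem in \cite{F15}: one uses the identity $\norm{\nabla e^{-t\Delta} u}_2^2 = \langle \Delta e^{-2t\Delta} u, u\rangle$ to transfer the gradient onto the semigroup, and then exploits the H\"older-type regularity of $h_t$ at spatial scale $t^{1/m}$ (a consequence of the analyticity of the semigroup combined with \eqref{B}) to extract, from the vanishing moment of $b_i$, a gain of the form $(r_i/t^{1/m})^\theta$ for some $\theta > 0$. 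This renders the tail $t$-integral convergent with bound $\alpha V(x_i, r_i)$; summing in $i$ and combining with the short-time estimate closes the weak $(1,1)$ bound. The delicate point is to make the Stein argument interact cleanly with the time-scale crossover at $t = 1$ in \eqref{B} and with the Calder\'on--Zygmund scale $r_i^m$, modeling it on the discrete treatment in \cite{F15} and the sub-Gaussian adaptation of Gaussian techniques worked out in \cite{Ch15}.
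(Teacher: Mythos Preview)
Your outline misidentifies the role of Stein's trick and relies on a step that does not follow from the hypotheses. In the paper, the cancellation $\int b_i\,d\mu=0$ is \emph{not} used. The argument is the Duong--McIntosh one: write $Tb_i = Te^{-t_i\Delta}b_i + T(I-e^{-t_i\Delta})b_i$ with $t_i=\rho(r_i)$ ($\rho(r)=r^2$ for $r<1$, $\rho(r)=r^m$ for $r\ge 1$), handle the first term by $L^2$-boundedness and a maximal function duality, and reduce the second to the \emph{kernel} estimate
\[
\int_{M\setminus B(y,r)} |\nabla_x h_t(x,y)|\,d\mu(x)\ \lesssim\ \frac{1}{\sqrt t}\,\exp\!\Big(-c\Big(\frac{\rho(r)}{t}\Big)^{\frac{1}{m-1}}\Big).
\]
This inequality---with the crucial factor $t^{-1/2}$---is the whole point, and it is exactly where Stein's trick enters: one applies the identity $(\partial_t+\Delta)u^q = -q(q-1)u^{q-2}|\nabla u|^2$ to $u=h_t(\cdot,y)$ with $1<q<2$, combines it with $L^q$-analyticity and \eqref{B}, and obtains a weighted $L^q$ bound on $\nabla h_t(\cdot,y)$ which, after H\"older, gives the $L^1$ estimate above. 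With $q=2$ (the naive integration by parts you propose) one only gets $t^{-\alpha}$ with $\alpha<\tfrac12$ in the sub-Gaussian regime, and the ensuing $t$-integral diverges; the passage to $q<2$ is precisely the new idea.

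By contrast, your long-time mechanism---H\"older-type regularity of $h_t$ at scale $t^{1/m}$ deduced from analyticity plus \eqref{B}, then combined with $\int b_i=0$ to produce a gain $(r_i/t^{1/m})^\theta$---is not available under these hypotheses. Pointwise H\"older continuity of the heat kernel is a consequence of the \emph{two-sided} sub-Gaussian bounds (equivalently, the parabolic Harnack inequality), not of the upper bound \eqref{B} and \eqref{doubling} alone. So the step ``extract $(r_i/t^{1/m})^\theta$ from the vanishing moment'' has no justification here, and without it your long-time integral does not converge. (Incidentally, Davies--Gaffney $L^2$--$L^2$ off-diagonal bounds are always Gaussian in $d^2/t$, not sub-Gaussian; this does not by itself break the short-time part, but your description of that step is also inaccurate.)
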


\noindent  Recall that a (sub)-linear operator defined on $L^1$ is said to be weak $(1,1)$ if, for all $\lambda>0$,
$$
\mu\left(\left\{x\in M;\ \left\vert Tf(x)\right\vert>\lambda\right\}\right)\lesssim \frac 1{\lambda} \left\Vert f\right\Vert_1.
$$
 The plan of the paper is as follows. Section \ref{integratestim} is devoted to  a crucial integral estimate for the gradient of the heat kernel. In Section \ref{weak11}, we prove our main result by using the Duong-McIntosh singular integral method from \cite{DM99} as in \cite{CD99}.  As a consequence, under the doubling property and the sub-Gaussian heat kernel estimate, the Riesz transform is weak $(1,1)$ hence $L^p$ bounded for $p\in (1,2)$  by interpolation with \eqref{2}. Section \ref{Rieszgraph} is the counterpart of Sections \ref{integratestim} and \ref{weak11} on graphs.   
Finally, in Section \ref{Vicsek}, we prove that on Vicsek manifolds, for all $p\in (1,2)$, \eqref{RRp} does not hold. It follows that on such manifolds, in  the range $1<p<+\infty$, \eqref{Rp} holds if and only if $1<p\leq 2$ and \eqref{RRp} holds if and only if $2\le p<+\infty$.

\section{Integrated estimate for the gradient of the heat kernel} \label{integratestim}

In the Gaussian case, the estimates from this section are due to Grigor'yan  in \cite{Gr95}. They go through a version of Lemma \ref{weightednablap} for $q=2$, which follows easily from the Gaussian estimate of the heat kernel by integration by parts
(see  also \cite[Lemma 2.3]{CD99}  for a simpler version).  In the subgaussian case, the corresponding argument  yields a factor $t^{-\alpha}$ with $0<\alpha<1/2$  in the right hand side of the crucial estimate \eqref{int gradient}  below,  which is not enough to prove the $L^p$-boundedness of the Riesz transform, see \cite[Lemma 3.2]{Ch15}. To obtain the stronger estimate we need, namely \eqref{intenew},  we will choose $1<q<2$, which will enable us  to  use a powerful argument by Stein.

Let us first derive some easy consequences of the doubling
property and the sub-Gaussian heat kernel estimate. 

 A classical consequence of (\ref{doubling}) is that there exists $\nu >0$ such that
\begin{equation} \label{D1}
\frac{V(x,r)}{V(x,s)}\lesssim \left(\frac{r}{s}\right)^\nu,
\,\,\forall x\in M, \,r\geq s>0.
\end{equation}

Fix now once and for all  $m>2$.

\begin{lemma}\label{integrated hk}
Let $M$ be a complete  Riemannian manifold satisfying the volume doubling property (\ref{doubling}) and the sub-Gaussian heat kernel upper bound (\ref{B}). For any $q > 1$, there holds, for all $y\in M$,
\begin{equation}\label{Lp hk}
\int_{M}h_t^q(x,y) d\mu(x)  \lesssim  
\left\{ \begin{aligned}
         & \frac{1}{\left[V(y,\sqrt t)\right]^{q-1}} ,&0<t<1, \\
         & \frac{1}{\left[V(y,t^{1/m})\right]^{q-1}}, &t\geq 1, 
 \end{aligned}\right.
\end{equation}
and 
\begin{equation}\label{analyticity}
\int_{M} \abs{\Delta h_t(x,y)}^q d\mu(x)  \lesssim 
\left\{ \begin{aligned}
         &  \frac{1}{t^q\left[V(y,\sqrt t)\right]^{q-1}}, &0<t<1, \\
         &  \frac{1}{t^q \left[V(y,t^{1/m})\right]^{q-1}}, &t\geq 1. 
 \end{aligned}\right.
\end{equation}
\end{lemma}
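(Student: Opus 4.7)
The plan is to derive both estimates from \eqref{B} together with elementary semigroup considerations, treating the two time regimes in parallel. Fix $y\in M$ and write $r_t:=\sqrt t$ if $0<t<1$, $r_t:=t^{1/m}$ if $t\ge 1$, so that the right-hand sides in \eqref{Lp hk} and \eqref{analyticity} are $V(y,r_t)^{1-q}$ and $t^{-q}V(y,r_t)^{1-q}$ respectively.

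For \eqref{Lp hk}, I would use the sandwich
$$\int_M h_t(x,y)^q\,d\mu(x)\;\le\;\|h_t(\cdot,y)\|_\infty^{\,q-1}\,\int_M h_t(x,y)\,d\mu(x)$$
and bound both factors cheaply from \eqref{B}. The $L^\infty$ bound $\|h_t(\cdot,y)\|_\infty\lesssim V(y,r_t)^{-1}$ follows by taking $d(x,y)=0$ in \eqref{B} and using the symmetry of $h_t$ (which lets us write the pointwise bound with $V(y,r_t)$ in place of $V(x,r_t)$). For the $L^1$ bound I claim $\int_M h_t(x,y)\,d\mu(x)\lesssim 1$ uniformly in $t>0$, obtained by decomposing $M$ into $B(y,r_t)$ and the annuli $A_k=B(y,2^{k+1}r_t)\setminus B(y,2^k r_t)$, $k\ge 0$, applying \eqref{B} on each annulus, and absorbing the doubling-type polynomial volume factor $V(y,2^{k+1}r_t)/V(y,r_t)\lesssim 2^{(k+1)\nu}$ from \eqref{D1} against the (sub-)Gaussian decay. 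This yields \eqref{Lp hk} directly.

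For \eqref{analyticity}, the plan is to invoke analyticity of the heat semigroup on $L^q$. Since $(e^{-t\Delta})_{t\ge 0}$ is a symmetric submarkovian semigroup, Stein's analyticity theorem gives $\|\Delta\,e^{-s\Delta}\|_{q\to q}\lesssim 1/s$ for every $1<q<\infty$. Using the semigroup identity, I would write
$$\Delta h_t(\cdot,y)=\Delta e^{-(t/2)\Delta}\bigl(h_{t/2}(\cdot,y)\bigr),$$
and apply the analyticity estimate with $s=t/2$ together with the already-proven \eqref{Lp hk} at time $t/2$; doubling ensures that $V(y,(t/2)^{1/m})$ and $V(y,t^{1/m})$ (resp.\ the $\sqrt t$ analogues) are comparable, so one recovers the claimed right-hand side after raising to the $q$-th power.

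The only substantive step is the $L^1$ bound on $h_t(\cdot,y)$ in the sub-Gaussian regime $t\ge 1$: one has to check that the polynomial volume growth $2^{k\nu}$ is beaten by $\exp\bigl(-c\,2^{km/(m-1)}\bigr)$ coming from \eqref{B}, which holds because $m/(m-1)>1$. The analyticity bound is classical and requires no adaptation to the sub-Gaussian setting, so I expect the argument to be short and essentially self-contained once \eqref{B} and \eqref{D1} are in hand.
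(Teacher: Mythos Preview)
Your proposal is correct and essentially mirrors the paper's argument: for \eqref{Lp hk} the paper bounds $h_t^q$ pointwise via \eqref{B} (with $V(y,r_t)$ by symmetry, just as you do) and then integrates the remaining exponential using the annular/doubling estimate that you also describe (this is precisely the paper's \eqref{est}--\eqref{est2}); for \eqref{analyticity} the paper does exactly what you propose, writing $\Delta h_t=\Delta e^{-(t/2)\Delta}h_{t/2}$ and invoking $L^q$-analyticity together with the already-proved \eqref{Lp hk} at time $t/2$. One minor simplification you could make: the bound $\int_M h_t(\cdot,y)\,d\mu\le 1$ is automatic from the sub-Markov property of $e^{-t\Delta}$, so the annular decomposition is not needed for that step (it is, however, exactly what underlies the integral of the pure exponential weight).
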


\begin{proof}
It follows from (\ref{doubling}) that for any $r\ge 0$ and any $c>0$ (see for example Step 1 of the proof of Lemma 3.2 in \cite{Ch15}), 
\begin{equation}
\label{est}
\int_{d(x,y)>r} e^{-c\frac{d^2(x,y)}{t}}d\mu(x)
 \lesssim
 e^{-\frac{c}{2}\frac{r^2}{t}}V\br{y,t^{1/2}},
\end{equation}
and
\begin{equation}
\label{est2}
\int_{d(x,y)>r} e^{-c\br{\frac{d^m(x,y)}{t}}^{1/(m-1)}}d\mu(x)
 \lesssim
 e^{-\frac{c}{2}\br{\frac{r^m}{t}}^{1/(m-1)}}V\br{y,t^{1/m}}.
\end{equation}

Thus \eqref{B} yields that for $t\geq 1$,
\begin{eqnarray*}
\int_{M} h_t^q(x,y) d\mu(x) 
& \lesssim & 
\frac{1}{\left[V(y,t^{1/m})\right]^{q}} \int_{M} e^{-cq\br{\frac{d^m(x,y)}{t}}^{1/(m-1)}} d\mu(x)
\\ &\lesssim&
\frac{1}{\left[V(y,t^{1/m})\right]^{q-1}},
\end{eqnarray*}
 with an analogous argument for $0<t<1$, hence \eqref{Lp hk}.

Now,  by the analyticity of the heat semigroup on $L^q(M,\mu)$ for $1<q<+\infty$,  for  $t\geq 2$,
\begin{eqnarray*}
\int_{M} \abs{\Delta h_t(x,y)}^q d\mu(x) 
&\leq& 
\norm{\Delta e^{-\frac{t}{2}\Delta}}_{q\to q}^q \int_M \abs{h_{t/2}(x,y)}^q d\mu(x)
\\ &\lesssim&
\frac{1}{t^q\left[V(y,t^{1/m})\right]^{q-1}}.
\end{eqnarray*}
Similarly for $0<t<2$, we repeat the proof above and obtain that
\[
\int_{M} \abs{\Delta h_t(x,y)}^q d\mu(x) \lesssim \frac{1}{t^q \left[V(y,t^{1/2})\right]^{q-1}}.
\]
 Since, for $1<t<2$, it holds that $V(y,t^{1/2})\simeq V(y,t^{1/m})$, we obtain \eqref{analyticity}.
\end{proof}

\medskip

\begin{lemma} \label{weightednablap} 
Let $M$ be a complete  Riemannian manifold satisfying the volume doubling property (\ref{doubling}) and the sub-Gaussian heat kernel upper 
bound (\ref{B}).
For  $q\in (1,2)$,  there exists $c>0$ such that, for all $y\in M$ and all $0<t< 1$, one has
$$
 \left\Vert \left\vert \nabla h_t(\cdot,y)\right\vert \exp\left(c\frac{d^2(\cdot,y)}t\right)\right\Vert_q\lesssim  \frac 1{\sqrt{t}\left[V(y,t^{1/2})\right]^{1-\frac 1q}}, 
$$
 and for all $y\in M$ and all $t\geq 1$, one has
$$
 \left\Vert \left\vert \nabla h_t(\cdot,y)\right\vert \exp\left(c\left(\frac{d^m(\cdot,y)}t\right)^{1/(m-1)}\right)\right\Vert_q\lesssim  \frac 1{\sqrt{t}\left[V(y,t^{1/m})\right]^{1-\frac 1q}}. 
$$
\end{lemma}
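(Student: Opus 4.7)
The plan is to carry out Stein's argument with a carefully chosen distribution of the exponential weight. Fix $q\in(1,2)$ and let $\phi(x)=d^2(x,y)/t$ and $V_t=V(y,\sqrt t)$ in the regime $0<t<1$, $\phi(x)=(d^m(x,y)/t)^{1/(m-1)}$ and $V_t=V(y,t^{1/m})$ in the regime $t\ge 1$, so that \eqref{B} reads $h_t(\cdot,y)\lesssim V_t^{-1}e^{-c_0\phi}$ for some $c_0>0$. Raising the desired inequality to the power $q$, the goal becomes
\[
\int_M |\nabla h_t(x,y)|^q\, e^{qc\phi(x)}\,d\mu(x)\lesssim t^{-q/2}V_t^{-(q-1)}
\]
for some $c>0$. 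The decisive step is to apply Hölder with exponents $2/q$ and $2/(2-q)$ after placing the \emph{entire} exponential weight on the second factor:
\begin{equation}\label{propHolder}
\int_M |\nabla h_t|^q e^{qc\phi}\,d\mu\le \left(\int_M |\nabla h_t|^2 h_t^{q-2}\,d\mu\right)^{q/2}\!\left(\int_M h_t^q e^{\frac{2qc}{2-q}\phi}\,d\mu\right)^{(2-q)/2}.
\end{equation}

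The second factor is a weighted version of \eqref{Lp hk}: \eqref{B} gives the pointwise bound $h_t^q e^{\frac{2qc}{2-q}\phi}\lesssim V_t^{-q}e^{-(qc_0-\frac{2qc}{2-q})\phi}$, and choosing $c<c_0(2-q)/2$ keeps the resulting exponent negative, after which \eqref{est}--\eqref{est2} give the bound $V_t^{-(q-1)}$. For the unweighted first factor I would invoke Stein's pointwise identity for $u=h_t(\cdot,y)>0$,
\[
h_t^{q-2}|\nabla h_t|^2=\frac{h_t^{q-1}\Delta h_t}{q-1}-\frac{\Delta(h_t^q)}{q(q-1)},
\]
and integrate against $d\mu$. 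The divergence term $\int_M \Delta(h_t^q)\,d\mu$ vanishes by integration by parts using the fast decay of $h_t$ and $\nabla h_t$ at infinity, giving $\int_M |\nabla h_t|^2 h_t^{q-2}\,d\mu=\frac{1}{q-1}\int_M h_t^{q-1}\Delta h_t\,d\mu$. A further Hölder with exponents $q/(q-1)$ and $q$ bounds this by $\frac{1}{q-1}\|h_t\|_q^{q-1}\|\Delta h_t\|_q$, and the two estimates of Lemma \ref{integrated hk} yield $\lesssim t^{-1}V_t^{-(q-1)}$. Substituting both bounds into \eqref{propHolder} produces exactly $t^{-q/2}V_t^{-(q-1)}$.

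The main obstacle I anticipate is the rigorous justification of $\int_M \Delta(h_t^q)\,d\mu=0$, since a priori only $h_t^q\in L^1(M)$ is known. I would proceed by a Lipschitz cutoff exhaustion, choosing $\eta_R$ equal to $1$ on $B(y,R)$, supported in $B(y,2R)$, with $|\nabla\eta_R|\lesssim 1/R$; then $\int_M \Delta(h_t^q)\eta_R\,d\mu=q\int_M h_t^{q-1}\nabla h_t\cdot\nabla\eta_R\,d\mu$, and the sub-Gaussian decay of $h_t$ together with a local parabolic regularity bound on $|\nabla h_t|$ forces the right hand side to vanish as $R\to\infty$. Beyond this routine technicality, the essential reason the $q<2$ route succeeds where the $q=2$ argument of \cite[Lemma 3.2]{Ch15} produces only a factor $t^{-\alpha}$ with $\alpha<1/2$ is that the weight distribution in \eqref{propHolder} removes $|\nabla\phi|^2$ from the game---this term would contribute only $t^{-2/m}$ in the sub-Gaussian regime---whereas Stein's identity on the \emph{unweighted} integral directly yields the sharp $t^{-1}$ scaling via the analyticity estimate of Lemma \ref{integrated hk}.
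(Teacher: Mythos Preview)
Your proposal is correct and is essentially the paper's own proof. The H\"older split \eqref{propHolder}, the treatment of the weighted factor via \eqref{B} and \eqref{est}--\eqref{est2}, and the bound on the unweighted factor through $\|h_t\|_q^{q-1}\|\Delta h_t\|_q$ and Lemma~\ref{integrated hk} all match exactly; the only cosmetic difference is that the paper packages Stein's identity in the parabolic form $J:=-(\partial_t+\Delta)h_t^q=q(q-1)h_t^{q-2}|\nabla h_t|^2$ (using $\partial_t h_t=-\Delta h_t$) before applying the same H\"older, whereas you write the equivalent elliptic identity directly.
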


\begin{proof} Let  $q\in (1,2)$.  Fix  $y\in M$. Define, for all $x\in M$ and all $t>0$, $u(x,t):=h_t(x,y)$. Since $M$ is connected, $u(x,t)$ is positive. 
A straightforward computation (see \cite[Lemma 1, page 86]{St70}, \cite[Lemma 2, page 49]{St701} in different settings) shows that
$$
  \br{\frac{\partial}{\partial t}+\Delta} u^q(x,t) 
=
q u^{q-1}(x,t) \br{\frac{\partial}{\partial t} +\Delta} u(x,t) - q(q-1)u^{q-2}(x,t)|\nabla_x u(x,t)|^{2}. 
$$
Since $u$ is a solution to the heat equation,
$$
 \br{\frac{\partial}{\partial t}+\Delta} u^q(x,t) 
=
- q(q-1)u^{q-2}(x,t)|\nabla_x u(x,t)|^{2}, 
$$
hence
\begin{equation}\label{eqj}
 |\nabla_x u(x,t)|^{2}=\frac{1}{q(q-1)}u^{2-q}(x,t)J(x,t), 
\end{equation}
where $ J(x,t)= -\br{\frac{\partial}{\partial t}+\Delta}u^q(x,t)$.  Note in particular that $J(x,t)\geq 0$ for all $x\in M$ and all $t>0$. 

 Since $u(.,t) \in L^q$,  then  $u^q(.,t) \in L^1$  and   $\int_M \Delta u^q(x,t)d\mu(x)=0$. 
Together with H\"older inequality, we have
\begin{eqnarray*}
\int_M J(x,t) \,d\mu(x)
& =& 
 -\int_M \frac{\partial}{\partial t} u^q(x,t) \,d\mu(x)
\leq 
\int_M |q u^{q-1}(x,t) \Delta u(x,t)| \,d\mu(x)
\\ &\lesssim&   
 \br{\int_M u^q(x,t)\,d\mu(x)}^{\frac{q-1} q} \br{\int_M |\Delta u(x,t)|^q d\mu(x)}^{1/q}. 
\end{eqnarray*}

From now on, let us assume that $t\ge 1$. The proof for $0<t<1$ is similar and we omit it here. It follows easily from  Lemma \ref{integrated hk} that
\begin{equation}
\label{JL1}
 \int_M J(x,t)\,d\mu(x)
 \lesssim \frac 1{t\left[V(y,t^{1/m})\right]^{q-1}}. 
\end{equation}

According to \eqref{eqj}, for all $x\in M$ and $t>0$,
$$
\left\vert \nabla_x u(x,t)\right\vert \exp\left(c\left(\frac{d^m(x,y)}t\right)^{1/(m-1)}\right)=C_q\left[u(x,t)\right]^{1-\frac q 2}J^{1/2}(x,t)\exp\left(c\left(\frac{d^m(\cdot,y)}t\right)^{1/(m-1)}\right), 
$$
 so that
\begin{eqnarray*}
&&  \left\Vert \left\vert \nabla u(\cdot,t)\right\vert \exp\left(c\left(\frac{d^m(\cdot,y)}t\right)^{1/(m-1)}\right)\right\Vert_q^q   \\
 &=&  \displaystyle C_q^q \int_M \left[u(x,t)\right]^{q\left(1-\frac q 2\right)}  J^{q/2}(x,t)\exp\left(cq\left(\frac{d^m(x,y)}t\right)^{1/(m-1)}\right)\,d\mu(x). 
\end{eqnarray*}

By H\"older, since  $q<2$,   the latter quantity is bounded from above by
 $$  \left(\int_M u^{q}(x,t)\exp\left(c\frac{q}{1-\frac q 2}\left(\frac{d^m(x,y)}t\right)^{1/(m-1)}\right)\,d\mu(x)\right)^{1-\frac q2}\left(\int_M J(x,t)\,d\mu(x)\right)^{\frac q2}. $$
 The second factor is bounded from above by \eqref{JL1}.
 As for the first one, one uses \eqref{B}, so that if $c$ is small enough (only depending on $q$), there exists $c'>0$ such that
 \begin{eqnarray}
&&\left(\int_M u^{q}(x,t)\exp\left(c\frac{q}{1-\frac q 2}\left(\frac{d^m(x,y)}t\right)^{1/(m-1)}\right)\,d\mu(x)\right)^{1-\frac q2}\\ \nonumber
&\lesssim&  \frac 1{\left[V(y,t^{1/m})\right]^{q(1-\frac q2)}}\left(\int_M \exp\left(-c^{\prime}\left(\frac{d^m(x,y)}t\right)^{1/(m-1)}\right)\,d\mu(x)\right)^{1-\frac q2} \\ \nonumber
& \lesssim &  \frac 1{\left[V(y,t^{1/m})\right]^{(q-1)(1-\frac q2)}},
\end{eqnarray}
where the last line is due to \eqref{est2} with $r=0$. 
It follows that
 
$$  \left\Vert \left\vert \nabla u(\cdot,t)\right\vert \exp\left(c\left(\frac{d^m(\cdot,y)}t\right)^{1/(m-1)}\right)\right\Vert_q^q \lesssim \frac 1{t^{q/2}[V(y,t^{1/m})]^{q-1}} ,$$ 
which yields the conclusion.
\end{proof}

\begin{corollary}
Let $M$ be a complete  Riemannian manifold satisfying (\ref{doubling}) and (\ref{B}). Then there exists $c>0$ such that for all $y\in M$, all $0<t< 1$ and all $r>0$,
\begin{equation}\label{int gradient'}
\int_{M\setminus B(y,r)} \left\vert \nabla_xh_t(x,y)\right\vert \,d\mu(x)\lesssim \frac 1{\sqrt{t}} \exp\left(-c\frac{r^2}t\right),
\end{equation}
and for all $y\in M$, all $t\geq 1$ and all $r>0$,
\begin{equation}\label{int gradient}
\int_{M\setminus B(y,r)} \left\vert \nabla_xh_t(x,y)\right\vert \,d\mu(x)\lesssim \frac 1{\sqrt{t}} \exp\left(-c\left(\frac{r^m}t\right)^{1/(m-1)}\right).
\end{equation}
\end{corollary}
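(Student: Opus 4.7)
The plan is to derive both \eqref{int gradient'} and \eqref{int gradient} directly from Lemma \ref{weightednablap} by combining H\"older's inequality with a splitting of the exponential weight. The crucial observation is that Lemma \ref{weightednablap} already carries the sharp prefactor $\frac{1}{\sqrt t}$, so all that is left to do is to convert a weighted $L^q$ estimate on all of $M$ into an unweighted $L^1$ estimate on $M\setminus B(y,r)$, recovering the exponential decay in $r$ from the surplus of weight.

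I focus on the large-time case $t\geq 1$ and set $\Phi(x):=\bigl(d^m(x,y)/t\bigr)^{1/(m-1)}$; the case $0<t<1$ is completely analogous, with $\Phi(x)=d^2(x,y)/t$ and with \eqref{est} playing the role of \eqref{est2}. Fix $q\in(1,2)$ and let $c_0>0$ denote the constant supplied by Lemma \ref{weightednablap}. The trick is to write $e^{-c_0\Phi(x)}=e^{-(c_0/2)\Phi(x)}\cdot e^{-(c_0/2)\Phi(x)}$, and, for $x\in M\setminus B(y,r)$, to bound the first factor pointwise by the constant $\exp\bigl(-(c_0/2)(r^m/t)^{1/(m-1)}\bigr)$. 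Pulling this constant out of the integral yields
\begin{equation*}
\int_{M\setminus B(y,r)}|\nabla_x h_t(x,y)|\,d\mu(x)\leq e^{-(c_0/2)(r^m/t)^{1/(m-1)}}\int_M |\nabla_x h_t(x,y)|\,e^{c_0\Phi(x)}\,e^{-(c_0/2)\Phi(x)}\,d\mu(x).
\end{equation*}

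Next I apply H\"older's inequality with exponents $q$ and $q'=q/(q-1)$ to the remaining integral. The first factor $\bigl\||\nabla_x h_t|\,e^{c_0\Phi}\bigr\|_q$ is exactly what Lemma \ref{weightednablap} controls, giving $\frac{1}{\sqrt t}\,[V(y,t^{1/m})]^{-(1-1/q)}$. The second factor $\|e^{-(c_0/2)\Phi}\|_{q'}$ is bounded using \eqref{est2} with $r=0$ (and with the small constant $c_0 q'/2$ in place of $c$), producing $[V(y,t^{1/m})]^{1/q'}=[V(y,t^{1/m})]^{1-1/q}$. The two volume powers cancel exactly, leaving the claimed bound $\frac{1}{\sqrt t}\exp\bigl(-(c_0/2)(r^m/t)^{1/(m-1)}\bigr)$, which is \eqref{int gradient} with $c=c_0/2$.

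There is no genuine obstacle here beyond bookkeeping; the argument is purely formal once Lemma \ref{weightednablap} is in hand. The only point to verify is that the constant $c_0$ may be halved without loss, so that the residual weight $e^{-(c_0/2)\Phi}$ is still integrable in the sense of \eqref{est2}. This is automatic because \eqref{est} and \eqref{est2} hold for every positive constant $c$, only affecting implicit multiplicative constants. The real content of the corollary is thus concentrated in the preceding Lemma \ref{weightednablap}, whose proof exploited Stein's trick to obtain the sharp $\frac{1}{\sqrt t}$ scaling.
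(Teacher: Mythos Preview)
Your proof is correct and follows essentially the same approach as the paper's. The only cosmetic difference is that you split the weight as $e^{-c_0\Phi}=e^{-(c_0/2)\Phi}\cdot e^{-(c_0/2)\Phi}$ and extract the exponential decay pointwise before applying H\"older over all of $M$, whereas the paper applies H\"older directly on $M\setminus B(y,r)$ and lets \eqref{est2} supply the decay factor $e^{-c''(r^m/t)^{1/(m-1)}}$ together with the volume term; the bookkeeping and the outcome are identical.
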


\begin{proof} Fix  $q\in (1,2)$  and let $c>0$ be given by Lemma \ref{weightednablap}. Then, by H\"older,
\begin{eqnarray*}
 \int_{M\setminus B(y,r)} \left\vert \nabla_xh_t(x,y)\right\vert \,d\mu(x) & \leq &  \displaystyle \left\Vert \left\vert \nabla h_t(\cdot,y)\right\vert \exp\left(c\left(\frac{d^m(\cdot,y)}t\right)^{1/(m-1)}\right)\right\Vert_q\\
& &  \displaystyle \left(\int_{M\setminus B(y,r)}  \exp\left(-c^{\prime}\left(\frac{d^m(x,y)}t\right)^{1/(m-1)}\right)\,d\mu(x)\right)^{1-\frac 1 q},\end{eqnarray*}
and by \eqref{est2} and Lemma \ref{weightednablap}, for $t\ge 1$,
$$ \int_{M\setminus B(y,r)} \left\vert \nabla_xh_t(x,y)\right\vert \,d\mu(x) \lesssim  \displaystyle \frac 1{\sqrt{t}\left[V(y,t^{1/m})\right]^{1-\frac 1 q}} \exp\left(-c''\left(\frac{r^m}t\right)^{1/(m-1)}\right)\left[V(y,t^{1/m})\right]^{1-\frac 1 q}. 
$$
which gives \eqref{int gradient}.

For $0<t<1$, \eqref{int gradient'} is obtained in the same way.
\end{proof}

\begin{corollary} \label{EstimateKernelCor}
Let $M$ be a complete  Riemannian manifold satisfying (\ref{doubling}) and  (\ref{B}).  
Then for all $y\in M$, all $r,t>0$,
\begin{equation}\label{intenew}
\int_{M\setminus B(y,r)} \left\vert \nabla_x h_t(x,y)\right\vert \,d\mu(x)\lesssim \frac 1{\sqrt{t}} \exp\left(-c\left(\frac{s}t\right)^\frac{1}{m-1}\right),
\end{equation}
where $s=r^2$ if $r<1$ and $s=r^m$ if $r\geq 1$.
\end{corollary}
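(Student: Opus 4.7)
The plan is simply to unify the two previously established estimates \eqref{int gradient'} (for $0<t<1$) and \eqref{int gradient} (for $t\geq 1$) by a case analysis on the relative sizes of $r$ and $t$, and then to show that in each regime the exponential factor on the right-hand side dominates $\exp(-c(s/t)^{1/(m-1)})$ for an appropriate constant $c>0$.

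First I would split according to $t\geq 1$ or $t<1$. In the regime $t\geq 1$, \eqref{int gradient} is directly applicable. If furthermore $r\geq 1$, then $s=r^m$ and we are done. If $r<1$, then $s=r^2$ and $s/t\leq 1$, so $\exp(-c(s/t)^{1/(m-1)})$ is bounded below by a positive constant, and the bound $\frac{1}{\sqrt t}\exp(-c(r^m/t)^{1/(m-1)})\leq \frac 1{\sqrt t}\lesssim \frac 1{\sqrt t}\exp(-c(s/t)^{1/(m-1)})$ is immediate.

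In the regime $t<1$, \eqref{int gradient'} yields a Gaussian factor $\exp(-cr^2/t)$. If $r<1$, then $s=r^2$, and the desired bound reduces to $\exp(-cr^2/t)\lesssim\exp(-c'(r^2/t)^{1/(m-1)})$. This is routine: when $r^2/t\leq 1$ the right-hand side is uniformly positive and the claim is trivial, while when $r^2/t\geq 1$ the inequality $u^{1/(m-1)}\leq u$ for $u\geq 1$ together with $1/(m-1)<1$ gives the comparison after adjusting $c$. If $r\geq 1$, then $s=r^m$ and one needs $\exp(-cr^2/t)\lesssim\exp(-c'(r^m/t)^{1/(m-1)})$; since $r\geq 1>t$, a direct computation shows $r^2/t\geq r^{m/(m-1)}t^{-1/(m-1)}=(r^m/t)^{1/(m-1)}$, which does the job.

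I do not anticipate any genuine obstacle: the whole argument is bookkeeping on the exponents. The only point that requires a brief check is the last comparison $r^2/t\geq (r^m/t)^{1/(m-1)}$, which amounts to $r^{(m-2)/(m-1)}\geq t^{(m-2)/(m-1)}$ and follows from $r\geq 1>t$ together with $m>2$. In other words, the corollary is really a convenient uniform restatement of the two previous estimates, and the proof is a short unification argument rather than a new result.
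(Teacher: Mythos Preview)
Your proposal is correct and follows essentially the same route as the paper: a four-case analysis according to whether $t$ and $r$ are $<1$ or $\geq 1$, using \eqref{int gradient'} or \eqref{int gradient} accordingly and then comparing the resulting exponential factor with $\exp(-c(s/t)^{1/(m-1)})$ via elementary inequalities on the exponents. The only cosmetic difference is that the paper packages the comparison in the cases $r,t<1$ and $r<1\leq t$ by first obtaining $e^{-cs/t}$ and then invoking once the general fact $e^{-cx^\gamma}\lesssim e^{-cx^\beta}$ for $0<\beta<\gamma$, whereas you argue each case directly.
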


\begin{proof}
Set
$$I(t,r) := \sqrt{t} \int_{M\setminus B(y,r)} \left\vert \nabla_x h_t(x,y)\right\vert \,d\mu(x).$$
\begin{itemize}
\item Let $r,t<1$, then \eqref{int gradient'} yields 
$$I(t,s) \lesssim e^{-c\frac{r^2}{t}} = e^{-c\frac{s}{t}}.$$
\item Let $t<1\leq r$, then \eqref{int gradient'} yields $I(t,s) \lesssim  e^{-c\frac{r^2}{t}}$. 
Yet, since $t<r$, we have $\frac{r^2}{t} \geq \left(\frac{r^m}{t}\right)^{1/(m-1)}$. Therefore
$$I(t,s) \lesssim e^{-c(\frac{s}{t})^{1/(m-1)}}.$$
\item Let $r<1\leq t$, then \eqref{int gradient} yields $I(t,s) \lesssim e^{-c(\frac{r^m}{t})^{1/(m-1)}}$.
Yet, since $r<t$, we have $\left(\frac{r^m}{t}\right)^{1/(m-1)} \geq \frac{r^2}{t}$. Therefore
$$I(t,s) \lesssim e^{-c\frac{s}{t}}.$$
\item Let $r,t\geq 1$, then \eqref{int gradient} yields 
$$I(t,s) \lesssim e^{-c(\frac{r^m}{t})^{1/(m-1)}} = e^{-c(\frac{s}{t})^{1/(m-1)}}.$$
\end{itemize}
The conclusion follows from the fact that, for all $x>0$,
$$e^{-cx^\gamma} \lesssim e^{-cx^{\beta}}$$
if $0<\beta <\gamma$.
\end{proof}


\section{Weak (1,1) boundedness of the Riesz transform} \label{weak11}

We will prove the weak $(1,1)$ boundedness of the Riesz transform by adapting the argument of the second author and Duong (\cite{CD99}), or rather the part of the argument that is directly inspired from \cite{DM99}, to the sub-Gaussian case as in \cite{ChThese}. This adaptation is straighforward, which confirms that the main novelty of the present paper is the use of Stein's argument in Section \ref{integratestim} to derive \eqref{intenew}. We give it however for the sake of completeness.

 \begin{remark}
One could also use a theorem on Calder\'on-Zygmund operators without kernel as written in \cite{BK03} and improved in \cite[Theorem 1.1]{Aus07}. However, since \cite[Theorem 1.1]{Aus07} is not exactly written in the form we need, we choose to follow the original method of the second author and Duong.
\end{remark}

First recall the Calder\'on-Zygmund decomposition (see for example  \cite[Corollaire 2.3]{CW71}):

\begin{theorem}\label{C-Z}
Let $(M,d,\mu)$ be a metric measured space satisfying \eqref{doubling}. Then there exists $C>0$ such that for any given function $f\in L^1(M)\cap 
L^2(M)$ and $\lambda >0$, there exists a decomposition of $f$, $\displaystyle f=g+b=g+\sum_{i\in I} b_i $ so that
\begin{enumerate}
\item $|g(x)|\leq C\lambda$  for almost all $x\in M$;

\item There exists a sequence of balls $B_i =B(x_i,r_i)$ so that,   for all $i\in I$,  $b_i\in L^1(M)\cap L^2(M)$  is supported in $B_i$ and
\[
\int| b_i(x)|d\mu(x)\leq C\lambda\mu(B_i)\text{ and } 
\int b_i(x)d\mu(x)=0; 
\]

\item $\displaystyle \sum_{i\in I} \mu(B_i)\leq \frac{C}{\lambda}\int|f(x)|d\mu(x)$;

\item There exists $k\in \mathbb{N}^*$ such that each $x\in M$ is contained in at most $k$ balls $B_i$.
\end{enumerate}
\end{theorem}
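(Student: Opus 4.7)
The plan is to follow the classical Calder\'on-Zygmund stopping-time argument, adapted to spaces of homogeneous type as in Coifman-Weiss. The first step is to introduce the uncentered Hardy-Littlewood maximal operator
\[
\mathcal{M}f(x):=\sup_{B\ni x}\frac{1}{\mu(B)}\int_B |f|\,d\mu,
\]
which is of weak type $(1,1)$ on $(M,d,\mu)$: this follows from a Vitali-type covering argument that relies only on the doubling property \eqref{doubling}. Setting $\Omega_\lambda:=\{\mathcal{M}f>C_0\lambda\}$ for a sufficiently large constant $C_0$, one obtains $\mu(\Omega_\lambda)\lesssim\lambda^{-1}\|f\|_1$, and by the Lebesgue differentiation theorem (which is also valid under \eqref{doubling}) one has $|f|\leq C_0\lambda$ almost everywhere on $M\setminus\Omega_\lambda$.

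Next I would produce a Whitney-type covering of the open set $\Omega_\lambda$ by balls $B_i=B(x_i,r_i)$, where $r_i$ is comparable to $d(x_i,M\setminus\Omega_\lambda)$, so that $\Omega_\lambda=\bigcup_i B_i$ while a fixed dilate $\kappa B_i$ meets $M\setminus\Omega_\lambda$. A Vitali selection combined with the doubling property then yields the bounded overlap stated in~(4). Fixing a partition of unity $\{\varphi_i\}$ subordinate to $\{B_i\}$ with $\sum_i\varphi_i=\chi_{\Omega_\lambda}$, I would then set
\[
b_i:=\left(f-\frac{\int f\varphi_i\,d\mu}{\int\varphi_i\,d\mu}\right)\varphi_i,\qquad g:=f-\sum_i b_i.
\]
The support inclusion $\supp b_i\subseteq B_i$ and the cancellation $\int b_i\,d\mu=0$ are built into the definition, and property~(3) is immediate from $\sum_i\mu(B_i)\lesssim\mu(\Omega_\lambda)$ via the finite overlap.

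All remaining estimates rest on one observation: since $\kappa B_i$ meets $\Omega_\lambda^c$, there exists $y_i\in\kappa B_i$ with $\mathcal{M}f(y_i)\leq C_0\lambda$, hence $\frac{1}{\mu(\kappa B_i)}\int_{\kappa B_i}|f|\,d\mu\lesssim\lambda$, and doubling transfers this bound to $B_i$. This immediately gives $\int|b_i|\,d\mu\lesssim\lambda\mu(B_i)$ and controls each coefficient $c_i:=(\int\varphi_i\,d\mu)^{-1}\int f\varphi_i\,d\mu$ by $C\lambda$. I expect the main obstacle to be the pointwise bound $|g|\leq C\lambda$: off $\Omega_\lambda$ one has $g=f$ and Lebesgue differentiation closes the estimate, while on $\Omega_\lambda$ one writes $g=\sum_i c_i\varphi_i$ and must use the bounded overlap of $\{B_i\}$ to upgrade $|c_i|\lesssim\lambda$ to $|g|\lesssim\lambda$ almost everywhere. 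Finally, $b_i\in L^1\cap L^2$ is automatic from $f\in L^2$ together with boundedness of $\varphi_i$ and $\mu(B_i)<\infty$, which completes the decomposition.
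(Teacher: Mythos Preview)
Your outline is correct and is precisely the classical Coifman--Weiss construction in spaces of homogeneous type. The paper does not actually prove this theorem: it simply recalls the statement and cites \cite[Corollaire 2.3]{CW71}, which is exactly the argument you sketch. So there is nothing to compare; you have supplied the standard proof that the paper takes for granted.
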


Note that it follows from $(2)$ and $(3)$ that  $\Vert b\Vert_1\leq C\Vert f\Vert_1$ and $\Vert g\Vert_1\leq(1+C)\Vert f\Vert_1$.

\noindent{\em Proof of Theorem \ref{main}:}

\noindent Denote $T=\nabla \Delta^{-1/2}$. Since $T$ is always $L^2$ bounded, it is enough to show that $T$ is weak $(1,1)$. Then, for $1<p<2$, the $L^p$ 
boundedness of $T$ follows from the Marcinkiewicz interpolation theorem.

We aim to show that 
\[
\mu(\{x: |T f(x)|>\lambda\})\leq C\lambda^{-1}\Vert f\Vert_1,
\]
for all $\lambda>0$  and all $f\in L^1(M)\cap L^2(M)$. 

Let $\lambda>0$ and $f\in L^1(M)\cap L^2(M)$.  Consider the Calder\'{o}n-Zygmund decomposition of $f$ at the level $\lambda$   (we shall use the notation of Theorem \ref{C-Z} in the sequel of the argument). Then
\[
\mu(\{x: |T f(x)|>\lambda\})
\leq\mu(\{x:|T g(x)|> \lambda/ 2 \})+\mu(\{x:|T b(x)|>\lambda/ 2 \}).
\]
Since $T$ is $L^2$ bounded and $|g(x)|\leq C\lambda$, it follows that
\[
\mu(\{x: |T g(x)|>\lambda/ 2\}) 
\leq C\lambda^{-2} \norm{g}_2^2 \leq C\lambda^{-1}\norm{g}_1
\leq C\lambda^{-1}\Vert f\Vert_1.
\]

It remains to prove 
\begin{equation} \label{secondpart}
\mu\left(\left\{x: \left|T \left(\sum_{i\in I}b_i\right)(x)\right|>\lambda/ 2\right\}\right) \leq C \lambda^{-1}\Vert f\Vert_1.
\end{equation}
Define for convenience
\begin{eqnarray}
\label{t} \rho(r)=\left\{ \begin{aligned}
         &r^{2},&0<r<1, \\
         & r^{m},&r\geq 1. 
         \end{aligned}\right.
\end{eqnarray} 
For each $i\in I$, write 
\begin{equation}\label{divide}
T b_i=T e^{-t_i\Delta }b_i+T \left(I-e^{-t_i\Delta }\right)b_i,
\end{equation}
where $t_i=\rho(r_i)$. 

We have then
\begin{eqnarray*}
\mu\left(\left\{x: \left|T \left(\sum_{i\in I}b_i\right) (x)\right|>\lambda/ 2\right\}\right) 
& \leq & \mu\left(\left\{x: \left|T \left(\sum_{i\in I}e^{-t_i\Delta }b_i\right) (x)\right|>\lambda/ 4\right\}\right)
\\ &&+ \mu\left(\left\{x: \left|T \left(\sum_{i\in I}(I-e^{-t_i\Delta })b_i\right) (x)\right|>\lambda/ 4\right\}\right).
\end{eqnarray*}

We begin with the estimate of the first term. Since $T$ is $L^2$ bounded, then
\[
\mu\left(\left\{x: \left|T \left(\sum_{i\in I}e^{-t_i\Delta }b_i\right) (x)\right|>\lambda/ 4\right\}\right)
\leq \frac{C}{\lambda^2}\left\Vert \sum_{i\in I}e^{-t_i\Delta }b_i\right\Vert_2^2.
\]
By a duality argument, 
\begin{eqnarray*}
\norm{\sum_{i\in I}e^{-t_i\Delta }b_i}_2 
&=& 
\sup_{\norm{\phi}_2=1} \abs{<\sum_{i\in I}e^{-t_i\Delta }b_i,\phi>}
\\&=& 
\sup_{\norm{\phi}_2=1}\abs{\sum_{i\in I} <b_i, e^{-t_i\Delta } \phi>}
\\&\leq & 
\sup_{\norm{\phi}_2=1}  \sum_{i\in I} \Big(\sup_{y\in B_i} e^{-t_i\Delta }|\phi|(y)\Big) \int_M |b_i| d\mu .
\end{eqnarray*}
 We claim that for all $r,t>0$ such that $t=\rho(r)$ and every   measurable function  $\varphi\geq 0$
\begin{equation} \label{calculusduality}
\sup_{y\in B(x,r)} e^{-t\Delta } \varphi(y) \leq C \inf_{z\in B(x,r)}  \mathcal M\varphi (z),
\end{equation}
where $\M$ denotes the uncentered Littlewood-Paley maximal operator: 
\[
\M f(x)=\sup_{B\ni x} \frac{1}{\mu(B)} \int_B |f(y)| d\mu(y).
\]
Estimates of the type  \eqref{calculusduality} are classical (see for instance  \cite[Proposition 2.4]{DR}). Let us give a proof of this particular instance for the sake of completeness. 
We use the notation $C_j(B)$ for $2^{j+1}B \setminus 2^jB$ when $j\geq 2$ and $C_1(B) = 4B$.
If $B=B(x,r)$ and $t=\rho(r)$, for any $y\in B$ and any $z\in C_j(B)$, we have with  \eqref{B} 
$$h_t(y,z) \lesssim  \frac{1}{V(y,r)}e^{-c2^j}.$$
Therefore
\begin{eqnarray*}
e^{-t\Delta } \varphi(y)
&=& 
\int_{M}h_{t}(y,z) \varphi(z) d\mu(z)
=
\sum_{j=1}^{\infty} \int_{C_j(B)}h_{t}(y,z) \varphi(z) d\mu(z)
\\ & \lesssim & 
\sum_{j=1}^{\infty} \frac{V(x,2^{j+1}r)}{V(y,r)} e^{-c2^j} \frac{1}{V(x,2^{j+1}r)}\int_{B(x,2^{j+1}r)} \varphi(z) d\mu(z)
\\ & \lesssim & 
 \sum_{j=1}^{\infty}  \frac{V(y,2^{j+2}r)}{V(y,r)} e^{-c2^j} \frac{1}{V(2^{j+1}B)}\int_{2^{j+1}B} \varphi(z) d\mu(z)
\\ & \lesssim & 
 \inf_{z\in B} \mathcal M\varphi(z),
\end{eqnarray*}
which is exactly \eqref{calculusduality}.

\noindent Then, if $\|\phi\|_2 = 1$,
\begin{eqnarray*}
\sum_{i\in I} \Big(\sup_{y\in B_i} e^{-t_i\Delta }|\phi|(y)\Big) \int_M |b_i| d\mu 
&\lesssim&
\sum_{ i\in I } \inf_{y\in B(x,t^{1/m})} {\mathcal M\phi (y)} \int_M |b_i| d\mu
\\ &\lesssim&  
\lambda \sum_{ i\in I} \int_{B_i}  {\M\phi(y)} d\mu(y) 
\\ & =  & 
\lambda \int_M \sum_{ i\in I } \chi_{B_i}(y) {\M\phi(y)} d\mu(y)
\\ &\lesssim & 
\lambda \int_{\cup_{ i\in I}B_i}  {\M\phi(y)} d\mu(y)
\\ &\lesssim&
\lambda \left[\mu\left(\bigcup_{ i\in I}B_i\right)\right]^{1/2} \| {\M\phi}\|_2 
\\ &\lesssim&
\lambda^{1/2}\norm{f}_1^{1/2}.
\end{eqnarray*}
The second line follows from property $(2)$ of the Calder\'on-Zygmund decomposition.  The fourth  line  is due to the finite overlapping of $\{B_i\}$. 
The last but one is a consequence of H\"older inequality and the last one is due to the $L^2$-boundedness of the Hardy-Littlewood maximal function and property $(3)$ of the Calder\'on-Zygmund decomposition.

Hence
\[
\norm{\sum_{i\in I}e^{-t_i\Delta }b_i}_2  \lesssim  \lambda^{1/2}\norm{f}_1^{1/2},
\]
and
\[
\mu\left(\left\{x: \left|T  \left(\sum_{i\in I}e^{-t_i\Delta }b_i\right)(x)\right|>\lambda/ 4\right\}\right)  \lesssim  \lambda^{-1}\norm{f}_1.
\]

It remains to show that
$\mu\left(\left\{x: \left|T \left(\sum_{i\in I}(I-e^{-t_i\Delta })b_i\right)(x)\right|>\lambda/ 4\right\}\right)\leq C \lambda^{-1} \Vert f\Vert_1$. 
Write
\begin{eqnarray*}
&&\mu\left(\left\{x: \left|T \left(\sum_{i\in I}(I-e^{-t_i\Delta })b_i\right)(x)\right|>\lambda /4\right\}\right)
\\&\leq & 
\mu\left(\left\{x\in \bigcup_{i\in I}2B_i: \left|T \left(\sum_{i\in I}(I-e^{-t_i\Delta })b_i\right)(x)\right|>\lambda /4\right\}\right)
\\&& 
+\mu\left(\left\{x\in M\setminus \bigcup_{i\in I}2B_i: \left|T  \left(\sum_{i\in I}(I-e^{-t_i\Delta })b_i\right)(x)\right|> \lambda /4 \right\}\right)
\\&\leq & 
\sum_{i\in I}\mu (2B_i)+\frac{4}{\lambda} \sum_{i\in I}
\int_{M\backslash 2B_i}|T (I-e^{-t_i\Delta })b_i(x)|d\mu(x).
\end{eqnarray*}
We claim that for every $i\in I$, it holds 
\begin{equation} \label{claim}
\int_{M\setminus 2B_i}|T(I-e^{-t_i\Delta })b_i(x)|d\mu(x)\leq C \Vert b_i\Vert_1.
\end{equation}
Therefore by the doubling property and the Calder\'on-Zygmund decomposition, 
\[
\mu\left(\left\{x: \left|T \left(\sum_{i\in \mathcal I}(I-e^{-t_i\Delta })b_i\right)(x)\right|> \lambda/ 4 \right\}\right)
 \lesssim
 \sum_{ i\in I}\mu (B_i)+ \frac{1}{\lambda} \sum_{i\in I} \norm{b_i}_1
 \lesssim \lambda^{-1} \Vert f\Vert_1,
\]
which concludes the proof.

So it remains to prove \eqref{claim}. First, the spectral theorem gives us that $\Delta ^{-1/2}f=c\int_0^\infty e^{-s\Delta}f\frac{ds}{\sqrt s}$. 
Therefore, if $f\in L^2(M)$, 
\[\begin{split} 
\Delta^{-1/2} (I-e^{-t\Delta}) f &=c\int_0^\infty (e^{-s\Delta} - e^{-(s+t)\Delta})f\frac{ds}{\sqrt s} \\
& = c\int_{0}^\infty \left( \frac{1}{\sqrt s} - \frac{\chi_{\{s>t\}}}{\sqrt{s-t}}\right) e^{-s\Delta} f \, ds.
\end{split} \]
and thus for all $x\in M$
\[\begin{split}
|T(I-e^{-t\Delta}) f(x)| \leq C \int_M |f(y)| d\mu(y) \int_0^\infty \left| \frac{1}{\sqrt s} - \frac{\chi_{\{s>t\}}}{\sqrt{s-t}}\right| \left|\nabla_x  h_s(x,y)\right|ds.  
\end{split}\]
Set 
$$k_t(x,y) = \int_0^\infty \left| \frac{1}{\sqrt s} - \frac{\chi_{\{s>t\}}}{\sqrt{s-t}}\right| |\nabla_x  h_s(x,y)| ds.$$

Since $b_i\in L^2(M)$,  we have
\begin{eqnarray*}
\int_{M\backslash 2B_i}|T(I-e^{-t_i\Delta })b_i(x)|d\mu(x)
&\lesssim & 
\int_{M\backslash 2B_i}\int_{B_i}k_{t_i}(x,y)|b_i(y)|d\mu(y)d\mu(x)
\\&\lesssim &
\int_M |b_i(y)|\int_{d(x,y)\geq r_i}k_{t_i}(x,y)d\mu(x)d\mu(y).
\end{eqnarray*}

The claim \eqref{claim} will be proven if $\int_{d(x,y)\geq r}k_t(x,y) d\mu(x)$ is uniformly bounded for $t=\rho(r) >0$.
By  Corollary \ref{EstimateKernelCor}, we have
\begin{eqnarray*}
&&\int_{d(x,y)\geq r} k_t(x,y) d\mu(x)
\\&=&
\int_{d(x,y)\geq r}\int_0^\infty \left|\frac{1}{\sqrt s}-\frac{1_{\{s>t \}}}{\sqrt{s-t}}\right| |\nabla_x  h_s(x,y)|ds \,d\mu(x)
\\& = &
\int_0^\infty \abs{\frac{1}{\sqrt s}-\frac{1_{\{s>t \}}}{\sqrt{s-t}}}
\cdot \int_{d(x,y)\geq r}\abs{\nabla_x  h_s(x,y)}d\mu(x)ds
\\&\lesssim & 
\int_0^\infty \abs{\frac{1}{\sqrt s}-\frac{1_{\{s>t \}}}{\sqrt{s-t}}}e^{-c(\frac{t}{s})^{\frac 1 {m-1}}} \frac{ds}{\sqrt s}
\\&=& 
\int_0^t e^{-c\left(\frac{t}{s}\right)^{\frac 1 {m-1}}} \frac{ds}{s}
+
\int_t^\infty \abs{\frac{1}{\sqrt s}-\frac{1}{\sqrt{s-t}}} e^{-c\left(\frac{t}{s}\right)^{\frac 1 {m-1}}} \frac{ds}{\sqrt s}
\\&:=& 
K_1+K_2.
\end{eqnarray*}

 It is easily checked that $K_1,K_2$ are  bounded uniformly in $t>0$. Indeed, 
 \[
K_1= \int_0^1 e^{-c\left(\frac{1}{u}\right)^{\frac 1 {m-1}}} \frac{du}{u}<+\infty
\]
and
\begin{eqnarray*}
K_2
&\leq &
\int_t^\infty \abs{\frac{1}{\sqrt s}-\frac{1}{\sqrt{s-t}}} \frac{ds}{\sqrt s}
\\ & =&
\int_0^\infty \abs{\frac 1{\sqrt {u+1}}-\frac {1}{\sqrt u}} \frac{du}{\sqrt {u+1}}<+\infty. 
\end{eqnarray*}
\medskip
Note that we get the second line by the change of variable  $u=\frac{s}{t}-1$.


\section{The case of graphs} \label{Rieszgraph}

In this section, we give the counterpart of Theorem \ref{main} on graphs. Let us begin with some definitions.

A (weighted unoriented) graph is defined by a couple $(\Gamma,\mu)$ where $\Gamma$ is an infinite countable set (the set of vertices) and $\mu \geq 0$ is a symmetric weight on $\Gamma \times \Gamma$. We define the set of edges as $E = \{(x,y) \in \Gamma\times \Gamma, \, \mu_{xy} >0\}$ and we write $x\sim y$ (we say that $x$ and $y$ are neighbours) whenever $(x,y)\in E$.

\noindent We assume that the graph is connected and locally uniformly finite. A graph is connected if for all $x,y\in \Gamma$, there exists a path joining $x$ and $y$, that is a sequence $x=x_0,\dots,x_n = y$ such that for all $i\in \llbracket 1,n \rrbracket$, $x_{i-1}\sim x_i$ (the length of such a path being $n$). A graph is locally uniformly finite if there exists $M_0\in \N$ such that, for all $x\in \Gamma$, $\#\{y\in \Gamma, \, y\sim x\} \leq M_0$.

\noindent The graph is endowed with its natural metric $d$, where, for all $x,y\in \Gamma$, $d(x,y)$ is the length of the shortest path joining $x$ and $y$. 

\noindent We define the weight $m(x)$ of a vertex $\Gamma$ by $m(x) = \sum_{y\sim x} \mu_{xy}$. More generally, the measure of a subset $E\subset \Gamma$ is defined as $m(E): = \sum_{x\in E} m(x)$. For $x\in \Gamma$ and $r>0$, we denote by $B(x,r)$ the open ball of center $x$ and radius $r$ and by $V(x,r)$ the measure (or volume) of $B(x,r)$.

In this situation, we recall the volume doubling property. The graph $(\Gamma,\mu)$ satisfies \eqref{doublingraph} if, for any $x\in \Gamma$ and any $r>0$,
\begin{equation} \label{doublingraph} \tag{$D_{\Gamma}$}
V(x,2r) \lesssim  V(x,r).
\end{equation}

 For all $x,y \in \Gamma$, the reversible Markov kernel $p(x,y)$ is defined as $p(x,y) = \frac{\mu_{xy}}{m(x)}$. The kernel $p_k(x,y)$ is then defined recursively for all $k\in \N$ by
$$\left\{ \begin{array}{l}  p_0(x,y) = \delta(x,y) \\  p_{k+1}(x,y) = \sum_{z\in \Gamma} p(x,z)p_k(z,y). \end{array} \right.$$
Note that, for all $x\in \Gamma$ and all $k\geq 0$,  $\sum_{y\in \Gamma} p_k(x,y) = 1$. Moreover, one has $p(x,y) m(x) = p(y,x) m(y)$ for all $x,y\in \Gamma$.
The operator $P$ has kernel $p$, meaning that the following formula holds for all  $x\in\Gamma$: $P f(x) = \sum_{y\in \Gamma} p(x,y) f(y)$. For all $k\geq 1$, $P^k$ has kernel $p_k$.

A second assumption will be used. We say that $\Gamma$ satisfies \eqref{LB} if there exists $\varepsilon >0$ such that, for all $x\in \Gamma$,
\begin{equation} \label{LB} \tag{$LB$}
p(x,x) > \varepsilon.
\end{equation}
 The assumption \eqref{LB} implies the $L^2$-analyticity of $P$, that is $\|(I-P)P^n\|_{2\to 2} \leq \frac{C}{n}$, for all $n\in\N^*$, but the converse is false. We refer to \cite[Theorem 1.9]{FeSpectre} for a way to recover \eqref{LB} from the $L^2$-analyticity of $P$.
 
We say that $(\Gamma, \mu)$ satisfies the (Markov kernel) estimates \eqref{UEm} with $m\geq2$ if for all $x,y\in \Gamma$ and $k\in \N^*$
\begin{equation} \tag{$U\!E_{m,\Gamma}$} \label{UEm}
p_{k-1}(x,y) \lesssim \frac{m(y)}{V(y,k^{1/m})} \exp\left(-c \left(\frac{d^m(x,y)}{k} \right)^{\frac1{m-1}} \right).
\end{equation}
The shift between $k$ and $k-1$ allows  \eqref{UEm} to cover the obvious estimate of  $p_0(x,y)$.
When $m=2$, the above estimates are called Gaussian estimates. When $m>2$, they are called sub-Gaussian estimates.

We use the random walk $P$ to define a positive Laplacian and a length of the gradient by
$$\Delta_{\Gamma} : = I-P$$
and
$$\nabla_{\Gamma} g(x) = \left(\frac12 \sum_{y\in \Gamma} p(x,y) |g(y)-g(x)|^2 \right)^{1/2},$$
 say  for $g\in c_0(\Gamma)$, where $c_0(\Gamma)$ denotes the space of   finitely supported functions  on $\Gamma$.
In this context, the Riesz transform is the sublinear operator $\nabla_{\Gamma}\Delta_{\Gamma}^{-1/2}$. 
As in the case of Riemannian manifolds, we can study the $L^p$ boundedness of the Riesz transform, that is we can wonder when
\begin{equation} \tag{$R_{p,\Gamma}$} \label{Rpg} \|\nabla_{\Gamma} g\|_p \lesssim  \|\Delta_{\Gamma}^{1/2} g\|_p,\ \forall\,g\in
c_0(\Gamma).\end{equation}

The counterpart on graphs of Theorem \ref{thm-cd} is a result of the fourth author:

\begin{theorem}\cite[Theorem 1]{Russ00}
Let $(\Gamma,\mu)$ be a weighted graph satisfying \eqref{LB}, \eqref{doublingraph} and $(U\!E_{2,\Gamma})$. 
Then the Riesz transform $\nabla_{\Gamma} \Delta_{\Gamma}^{-1/2}$ is weak $(1,1)$ and $L^p$ bounded for all $p\in (1,2]$.
\end{theorem}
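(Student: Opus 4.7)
The plan is to adapt to the discrete setting the scheme of Sections \ref{integratestim} and \ref{weak11}, specialized to $m=2$ and with the continuous heat semigroup $(e^{-t\Delta})_{t\geq 0}$ replaced by the iterates $(P^k)_{k\in\N}$ of the Markov operator. Since $L^2$ boundedness of $T_\Gamma:=\nabla_\Gamma\Delta_\Gamma^{-1/2}$ follows from the discrete analogue of \eqref{2}, the goal reduces to a weak-type $(1,1)$ estimate; $L^p$ boundedness for $1<p<2$ then follows by Marcinkiewicz interpolation.

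The first task is to prove the graph analogue of Corollary \ref{EstimateKernelCor}: there exists $c>0$ such that for every $y\in \Gamma$, every integer $k\geq 1$ and every $r>0$,
\begin{equation}\label{plan:grad}
\sum_{x:\,d(x,y)\geq r} \left|\nabla_\Gamma p_{k-1}(\cdot,y)(x)\right|\,m(x) \;\lesssim\; \frac{1}{\sqrt k}\,\exp\!\left(-c\frac{r^2}{k}\right).
\end{equation}
The route mirrors Lemma \ref{weightednablap}. Fix $q\in(1,2)$ and set $u_k(x):=p_{k-1}(x,y)$. The pointwise convexity inequality
\[
q u_k^{q-1}(x)\,[(I-P)u_k](x) \;-\; [(I-P)u_k^q](x) \;\geq\; c_q\, u_k^{q-2}(x)\,|\nabla_\Gamma u_k|^2(x),
\]
valid since $q<2$ and $u_k>0$, plays the role of the identity \eqref{eqj} and furnishes $J_k:=qu_k^{q-1}(I-P)u_k-(I-P)u_k^q\geq 0$ with $|\nabla_\Gamma u_k|^2\lesssim u_k^{2-q}J_k$. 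Summing $J_k$ against $m$, the telescoping identity $\sum_x (I-P)u_k^q(x)\,m(x)=0$ (a direct consequence of the reversibility of $P$) together with H\"older, the $L^q$-analyticity bound $\|(I-P)P^{k-1}\|_{q\to q}\lesssim k^{-1}$ (provided by \eqref{LB}), and the $L^q$ bounds on $u_k$ coming from $(U\!E_{2,\Gamma})$ yield $\sum_x J_k(x)\,m(x)\lesssim k^{-1}V(y,\sqrt k)^{1-q}$. Plugging this back, a Gaussian-weighted $L^q$ bound on $\nabla_\Gamma u_k$ follows as in Lemma \ref{weightednablap}, and a final H\"older step against a Gaussian weight combined with \eqref{doublingraph} gives \eqref{plan:grad}.

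Granted \eqref{plan:grad}, the proof of weak $(1,1)$ transcribes Section \ref{weak11}. For $f\in L^1\cap L^2$ and $\lambda>0$, apply Theorem \ref{C-Z} in the doubling space $(\Gamma,d,m)$ to write $f=g+\sum_i b_i$ at level $\lambda$; to each ball $B_i$ of radius $r_i$ attach the integer scale $k_i:=\lceil r_i^2\rceil$ and split
\[
T_\Gamma b_i \;=\; T_\Gamma P^{k_i}b_i \;+\; T_\Gamma(I-P^{k_i})b_i.
\]
The first piece is controlled by $L^2$-duality, together with the maximal bound $\sup_{y\in B_i}P^{k_i}|\phi|(y)\lesssim\inf_{z\in B_i}\mathcal{M}\phi(z)$ (itself a consequence of $(U\!E_{2,\Gamma})$ and \eqref{doublingraph}, as in \eqref{calculusduality}), $L^2$ boundedness of $\mathcal{M}$, and property $(3)$ of the Calder\'on--Zygmund decomposition. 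The second piece is handled through the discrete spectral identity
\[
\Delta_\Gamma^{-1/2}(I-P^{k_i}) \;=\; c\sum_{n=0}^\infty a_n\,(P^n-P^{n+k_i}),\qquad a_n \asymp (n+1)^{-1/2},
\]
and Abel rearrangement, which reduces $\sum_{d(x,y)\geq 2r_i}|T_\Gamma(I-P^{k_i})b_i(x)|\,m(x)$ to summing $|a_n - a_{n-k_i}\chi_{\{n\geq k_i\}}|$ against the inner sum in \eqref{plan:grad}; as with the integrals $K_1,K_2$ at the end of Section \ref{weak11}, this gives a uniform bound in $k_i$ and hence the desired $\|b_i\|_1$-control analogous to \eqref{claim}.

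I expect the main obstacle to lie in Step 1. Unlike the smooth identity $(\partial_t+\Delta)u^q = qu^{q-1}(\partial_t+\Delta)u - q(q-1)u^{q-2}|\nabla u|^2$ used on manifolds, on a graph only the one-sided convexity inequality above is available, and the continuous time-derivative $\partial_t u^q$ has to be replaced by the finite difference $u_{k+1}^q - u_k^q$, whose $L^1(m)$-control relies on the analyticity estimate $\|(I-P)P^{k-1}\|_{q\to q}\lesssim k^{-1}$, that is, on \eqref{LB}. Once \eqref{plan:grad} is secured, the remainder of the argument is a mechanical transcription of Sections \ref{integratestim} and \ref{weak11} and presents no further substantial difficulty.
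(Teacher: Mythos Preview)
The pointwise inequality you place at the heart of Step~1,
\[
q\,u_k^{q-1}(x)\,[(I-P)u_k](x)-[(I-P)u_k^q](x)\ \geq\ c_q\,u_k^{q-2}(x)\,|\nabla_\Gamma u_k|^2(x),
\]
is not valid. Writing $a=u_k(x)$ and $b=u_k(y)$, the contribution of a single neighbour $y$ to the left side is $b^q-a^q-qa^{q-1}(b-a)=\tfrac{q(q-1)}{2}\xi^{q-2}(b-a)^2$ for some $\xi$ between $a$ and $b$, while its contribution to the right side is $\tfrac{c_q}{2}a^{q-2}(b-a)^2$. Since $q-2<0$, the needed bound $\xi^{q-2}\gtrsim a^{q-2}$ amounts to $\xi\lesssim a$, which fails whenever $b=u_k(y)\gg u_k(x)=a$; nothing in \eqref{LB}, \eqref{doublingraph}, $(U\!E_{2,\Gamma})$ forces a uniform bound on $u_k(y)/u_k(x)$ for $y\sim x$. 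So the inequality $|\nabla_\Gamma u_k|^2\lesssim u_k^{2-q}J_k$ breaks down, and with it your route to \eqref{plan:grad}.

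This is exactly the obstruction the paper flags before Lemma~\ref{lemmagraph} (``the `Hardy--Stein' identity \eqref{eqj} doesn't hold on graphs and a trick of Dungey \dots\ is needed''). The paper does not re-prove the $m=2$ statement (it is quoted from \cite{Russ00}), but its treatment of the $m>2$ analogue shows the correct substitute: one works with the pseudo-gradient $N_qu_k(x)=-u_k^{2-q}(x)[\partial_k+\Delta_\Gamma]u_k^q(x)$ and the averaging operator $Ag(x)=\sum_{z\sim x}g(z)$, and invokes \cite[Propositions~4.6--4.7]{Fen15} to obtain $|\nabla_\Gamma u_k|^2\lesssim A[N_qu_k]$ pointwise. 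The averaging by $A$ is precisely what compensates for the failure of your convexity bound; one then uses the $L^{q/2}$-boundedness of $A$ to pass the exponential weight through. After that, your outline is accurate: summing $J_k=-[\partial_k+\Delta_\Gamma]u_k^q$ against $m$, using H\"older together with the $L^q$-analyticity coming from \eqref{LB}, and the Calder\'on--Zygmund argument of Section~\ref{weak11} all transcribe without difficulty.
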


As in the case of Riemannian manifolds, we can extend the $L^p$-boundedness of the Riesz transform for $1<p\leq 2$ when $(\Gamma,\mu)$ satisfies sub-Gaussian estimates.

\begin{theorem} \label{maingraph}
Let $(\Gamma,\mu)$ be a weighted graph satisfying \eqref{LB}, \eqref{doublingraph} and \eqref{UEm} for some $m>2$. 
Then the Riesz transform $\nabla_{\Gamma} \Delta_{\Gamma}^{-1/2}$ is weak $(1,1)$ and $L^p$ bounded for all $p\in (1,2]$.
\end{theorem}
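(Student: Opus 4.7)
The plan is to mirror, step by step, the proof of Theorem \ref{main}, adapting each ingredient to the discrete setting. The proof of Theorem \ref{maingraph} will split into two main parts: (a) an integrated estimate on $|\nabla_{\Gamma} p_{k-1}(\cdot,y)|$, which is the graph analogue of Corollary \ref{EstimateKernelCor}, and (b) a Calder\'on--Zygmund/Duong--McIntosh argument, which produces the weak $(1,1)$ bound from (a). Once the weak $(1,1)$ bound is established, Marcinkiewicz interpolation with the (elementary) $L^2$ boundedness of $\nabla_{\Gamma} \Delta_{\Gamma}^{-1/2}$ yields $L^p$ boundedness for all $p\in(1,2]$.

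First I would reproduce the chain Lemma \ref{integrated hk} $\to$ Lemma \ref{weightednablap} $\to$ Corollary \ref{EstimateKernelCor} on $(\Gamma,\mu)$. The $L^q$ estimate on $p_{k-1}(\cdot,y)$ follows from \eqref{UEm} and the standard doubling tail estimate (discrete analogue of \eqref{est2}). The $L^q$ estimate on $\Delta_{\Gamma} p_{k-1}(\cdot,y) = (I-P)p_{k-1}(\cdot,y)$ uses the $L^q$-analyticity $\|(I-P)P^{n}\|_{q\to q}\lesssim 1/n$, which is granted by \eqref{LB}. The crucial Stein trick then produces, for $q\in(1,2)$, a weighted $L^q$ bound of the form
\[
\Bigl\Vert |\nabla_{\Gamma} p_{k-1}(\cdot,y)|\,\exp\!\bigl(c(d^m(\cdot,y)/k)^{1/(m-1)}\bigr)\Bigr\Vert_q \lesssim \frac{1}{\sqrt{k}\,[V(y,k^{1/m})]^{1-1/q}},
\]
from which H\"older and a tail estimate give the integrated gradient bound
\[
\sum_{d(x,y)\geq r} |\nabla_{\Gamma} p_{k-1}(x,y)|\,m(x) \lesssim \frac{1}{\sqrt{k}} \exp\!\Bigl(-c\bigl(\tfrac{s}{k}\bigr)^{1/(m-1)}\Bigr),
\]
with $s=r^2$ when $r<\sqrt k$ and $s=r^m$ otherwise, exactly as in Corollary \ref{EstimateKernelCor}. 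In the discrete setting the pointwise identity \eqref{eqj} is no longer available; one has only the \emph{inequality}
\[
(I-P)(u^q)(x) \geq q\,u^{q-1}(x)(I-P)u(x) + C_q\,u^{q-2}(x)|\nabla_{\Gamma} u|^2(x),
\]
coming from the convexity of $t\mapsto t^q$ on $(0,\infty)$ for $q\in(1,2)$, combined with an elementary inequality controlling $u(y)^q-u(x)^q$. This is precisely the Stein-type discrete inequality isolated in \cite{F15}, and it is enough to push the same argument through, since the ``missing'' sign only appears favourably in the resulting majorization.

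Second, I would run the argument of Section \ref{weak11} verbatim on $(\Gamma,\mu)$, with $\rho(r)=r^m$ (valid since $d$ is integer-valued and balls of interest have radius $\geq 1$). The Calder\'on--Zygmund decomposition on spaces of homogeneous type under \eqref{doublingraph} gives $f=g+\sum_i b_i$ at level $\lambda$, and for each $i$ we write $Tb_i = T P^{k_i} b_i + T(I-P^{k_i})b_i$ with $k_i=\rho(r_i)$. For the good part, the pointwise bound $\sup_{y\in B_i} P^{k_i}|\phi|(y)\lesssim \inf_{B_i}\mathcal{M}\phi$ is obtained from \eqref{UEm} and \eqref{doublingraph} exactly as in \eqref{calculusduality}, and the duality argument goes through unchanged. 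For the bad part, one uses the spectral representation
\[
\Delta_{\Gamma}^{-1/2} = c\sum_{n=0}^{\infty} a_n P^{n},\qquad a_n\sim n^{-1/2},
\]
(or the corresponding identity for $\Delta_{\Gamma}^{-1/2}(I-P^{k_i})$) together with the integrated gradient estimate just obtained, to produce a uniform bound $\sum_{d(x,y)\geq r}k_{k_i}(x,y)\,m(x)\lesssim 1$ that plays the role of \eqref{claim}. The main obstacle is therefore the careful handling of the discrete Stein inequality and the bookkeeping of the two regimes $r<\sqrt k$ versus $r\geq k^{1/m}$ when performing the discrete analogue of the computation of $K_1+K_2$; the rest of the manifold proof transfers without surprise.
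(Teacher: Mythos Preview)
Your overall strategy is exactly the paper's: transplant the manifold proof to graphs, with the only serious issue being the discrete substitute for the Stein identity \eqref{eqj}. The Calder\'on--Zygmund/Duong--McIntosh part, the choice $\rho(r)=r^m$, the maximal-function domination \eqref{calculusduality}, and the treatment of the kernel $k_t$ all carry over as you describe.

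The gap is in your proposed discrete Stein inequality. For $q\in(1,2)$ the function $t\mapsto t^q$ is convex, and convexity at the point $u(x)$ gives, for every $y$,
\[
u(y)^q - u(x)^q - q\,u(x)^{q-1}\bigl(u(y)-u(x)\bigr)\ \geq\ 0.
\]
Summing against $p(x,y)$ yields
\[
(I-P)(u^q)(x) - q\,u(x)^{q-1}(I-P)u(x)
= -\sum_{y} p(x,y)\Bigl[u(y)^q - u(x)^q - q\,u(x)^{q-1}\bigl(u(y)-u(x)\bigr)\Bigr]\ \leq\ 0,
\]
so your displayed inequality cannot hold with any positive constant $C_q$; a concrete counterexample is $u(x)=1$, $u(y)=2$ for all $y\sim x$. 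In particular, simple convexity gives the wrong sign and does \emph{not} produce a pointwise upper bound on $|\nabla_\Gamma u|^2$ by $u^{2-q}$ times a non-negative quantity.

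What the paper actually uses (this is Lemma \ref{lemmagraph}) is Dungey's trick, refined in \cite{Fen15}: one introduces the pseudo-gradient
\[
N_q u_k(x) = -\,u_k^{2-q}(x)\,[\partial_k+\Delta_\Gamma]u_k^q(x)
\]
and the averaging operator $Ag(x)=\sum_{z\sim x} g(z)$, and invokes Propositions 4.6--4.7 of \cite{Fen15} to get both $N_q u_k\geq 0$ and the key non-pointwise bound
\[
|\nabla_\Gamma u_k(x)|^2 \ \lesssim\ A[N_q u_k](x).
\]
The averaging $A$ is essential: it compensates for the failure of a pointwise identity like \eqref{eqj} on graphs. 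After this, the H\"older argument and the $L^q$-analyticity of $P$ (from \eqref{LB}) proceed exactly as in the manifold case and yield the weighted estimate you stated. So your sketch is correct once the convexity inequality is replaced by the Dungey--Feneuil pseudo-gradient estimate; without it, the argument does not go through.
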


Recall that the statement on $L^p$ in Theorem \ref{maingraph} also follows from  \cite[Corollary 1.30]{F15}. The proof of Theorem \ref{maingraph} is similar to the one of Theorem \ref{main} and we will only prove the counterpart of Lemma \ref{weightednablap}, where  the main difficulty lies.
Indeed, the ``Hardy-Stein'' identity \eqref{eqj} doesn't hold on graphs and a trick of Dungey (in \cite{Du08}), improved by the third author in \cite[Section 4]{Fen15}, is needed.

\begin{lemma} \label{lemmagraph}
Let $(\Gamma,\mu)$ be a weighted graph satisfying \eqref{LB}, \eqref{doublingraph} and \eqref{UEm} for some $m>2$. 
Then, for any $q\in (1,2)$,  there exists $c>0$ such that for all $y\in \Gamma$ and all $k\in \N^*$,
$$ \left\| \nabla_{\Gamma} p_{k-1}(.,y)\, e^{c\left(\frac{d^m(.,y)}{k} \right)^\frac1{m-1}} \right\|_q \lesssim \frac{m(y)}{\sqrt{k} V(y,k^{1/m})^{1-\frac1q}}.  $$
\end{lemma}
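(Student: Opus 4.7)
The plan is to transcribe the proof of Lemma \ref{weightednablap} to the discrete setting. The only genuine difficulty is that the pointwise Hardy--Stein identity \eqref{eqj}, which came from the chain rule, has no exact graph analog; as indicated in the paragraph preceding the lemma, one must replace it by a one-sided discrete substitute, originally due to Dungey \cite{Du08} and sharpened in \cite[Section~4]{Fen15}. I would work throughout with $u_n(x) := p_n(x,y)/m(y) = p_n(y,x)/m(x)$, so that $u_{n+1} = Pu_n$; the factor $m(y)$ in the right-hand side of the lemma appears only at the end, on returning to $p_{k-1}(\cdot,y) = m(y)\,u_{k-1}$.

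The first step is a pointwise inequality
$$|\nabla_\Gamma u_n(x)|^2 \lesssim u_n(x)^{2-q}\,F_n(x),$$
for an explicit non-negative density $F_n$ playing the role of $J/[q(q-1)]$ in \eqref{eqj}. The natural candidate, motivated by the strict-convexity bound
$$a^q - b^q - qb^{q-1}(a-b) \geq c_q (a+b)^{q-2}(a-b)^2 \quad (a,b\geq 0,\, q\in(1,2)),$$
applied with $a=u_n(z)$, $b=u_n(x)$ and averaged against $p(x,z)$, is
$$F_n(x) := q\,u_n(x)^{q-1}\,\Delta_\Gamma u_n(x) - \Delta_\Gamma\bigl(u_n^q\bigr)(x).$$
The weight $(u_n(z)+u_n(x))^{q-2}$ produced by the convexity inequality is not a priori comparable to $u_n(x)^{q-2}$, and it is precisely here that the hypothesis \eqref{LB} enters, controlling the oscillation of $u_n$ between neighbours (after a harmless one-step time shift if needed). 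Summing $F_n(x)m(x)$ over $x\in \Gamma$, the $\Delta_\Gamma(u_n^q)$ term vanishes by self-adjointness of $\Delta_\Gamma$ on $\ell^2(\Gamma,m)$, and H\"older combined with the $L^q$-analyticity of $P$ (a consequence of \eqref{LB}; see \cite[Theorem~1.9]{FeSpectre}) yields
$$\sum_{x\in \Gamma} F_n(x)\,m(x) \lesssim \|u_n\|_q^{q-1}\,\|\Delta_\Gamma u_n\|_q \lesssim \frac{1}{n\,V(y,n^{1/m})^{q-1}},$$
where the last estimate uses the graph analog of Lemma \ref{integrated hk}, extracted from \eqref{doublingraph} and \eqref{UEm} exactly as in the continuous case. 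This is the discrete counterpart of \eqref{JL1}.

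With these two ingredients the conclusion follows verbatim as in Lemma \ref{weightednablap}: writing
$$|\nabla_\Gamma u_n(x)|\,e^{cW(x)} = u_n(x)^{1-q/2}\,F_n(x)^{1/2}\,e^{cW(x)}, \qquad W(x) := \bigl(d^m(x,y)/n\bigr)^{1/(m-1)},$$
and applying H\"older with conjugate exponents $2/q$ and $1/(1-q/2)$, the $F_n^{1/2}$-factor is controlled by the display above, and the $u_n^q e^{c'qW/(2-q)}$-factor by the pointwise estimate \eqref{UEm} together with the graph analog of \eqref{est2}, provided $c$ is chosen small enough at the outset so that the loss $q/(1-q/2)$ in the exponent is beaten by the Gaussian gain from \eqref{UEm}. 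Taking $q$-th roots and multiplying by $m(y)$ delivers the inequality in the statement with $n=k-1$. The main obstacle of the whole argument is the first step, namely producing the pointwise Hardy--Stein substitute with a clean $u_n(x)^{2-q}$ weight in the case where $u_n(z) \gg u_n(x)$ for some neighbour $z\sim x$; this is precisely the obstruction that does not exist in the continuous case and requires the lower bound \eqref{LB} on $p(x,x)$ in an essential way.
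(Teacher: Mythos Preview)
Your overall strategy mirrors the paper's proof closely, and Steps~2 and~3 (the $\ell^1$ bound on the Hardy--Stein density via H\"older and analyticity, followed by the weighted H\"older argument) are fine. The gap is in Step~1. The pointwise inequality you write down,
\[
|\nabla_\Gamma u_n(x)|^2 \lesssim u_n(x)^{2-q}\,F_n(x),
\]
is precisely the statement that fails on graphs. Expanding your $F_n$ gives
\[
F_n(x)=\sum_{z} p(x,z)\bigl[u_n^q(z)-u_n^q(x)-qu_n^{q-1}(x)(u_n(z)-u_n(x))\bigr]\gtrsim \sum_z p(x,z)\,(u_n(z)+u_n(x))^{q-2}(u_n(z)-u_n(x))^2,
\]
so for $u_n^{2-q}(x)F_n(x)$ to dominate $|\nabla_\Gamma u_n(x)|^2$ you need $u_n(z)\lesssim u_n(x)$ for every neighbour $z$. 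You claim \eqref{LB} provides this, but it does not: \eqref{LB} only says $p(x,x)>\varepsilon$, which yields $u_{n+1}(x)\gtrsim u_n(x)$, a bound in \emph{time} at a fixed vertex, not a bound on the spatial ratio $u_n(z)/u_n(x)$. Since no uniform lower bound on $p(x,z)$ for $z\neq x$ is assumed, one cannot recover $u_n(z)\lesssim u_n(x)$ by any time shift.

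The paper resolves this not by controlling the oscillation but by weakening the pointwise inequality: quoting \cite[Propositions~4.6--4.7]{Fen15}, it proves
\[
|\nabla_\Gamma u_k(x)|^2 \lesssim A[N_q u_k](x),\qquad A g(x):=\sum_{z\sim x} g(z),\quad N_q u_k:=-u_k^{2-q}[\partial_k+\Delta_\Gamma]u_k^q,
\]
i.e.\ the gradient at $x$ is bounded by the pseudo-gradient \emph{averaged over the neighbours of $x$}. The problematic neighbour $z$ with $u_k(z)\gg u_k(x)$ then contributes through $N_q u_k(z)$, where the prefactor $u_k(z)^{2-q}$ is the large one. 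This averaging is harmless for the rest of the argument because $A$ is bounded on $L^r$ for $r\in(0,1]$, so after moving the exponential weight inside (at the cost of replacing $d$ by $d+1$) one passes from $\|A[N_q u_k\, e^{\cdots}]\|_{q/2}$ back to $\|N_q u_k\, e^{\cdots}\|_{q/2}$ and continues exactly as you describe. Note also that the paper's density $J_k=P(u_k^q)-(Pu_k)^q$ involves the time derivative, not just the spatial Laplacian as in your $F_n$; this is what makes the averaged bound go through in \cite{Fen15}.
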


\begin{proof}
Let $q\in (1,2)$  and $y\in \Gamma$. In the sequel, we write $u_k(x)$ for $p_{k-1}(x,y)$.  We may and do assume that $k\geq 2$. 

\medskip

\noindent \underline{Step 1:} Estimate of the gradient by a  pseudo-gradient.

\noindent Let us define the  pseudo-gradient 
$$N_q u_k(x) = -u_k^{2-q}(x) [\partial_k +\Delta_{\Gamma}] u_k^q(x)$$
where $\partial_k$ denotes the discrete time differentiation defined by $\partial_k u_k = u_{k+1} - u_k$.
We also need the ``averaging'' operator 
$$A g(x) = \sum_{z \sim x} g(z).$$
Propositions 4.6 and 4.7 in \cite{Fen15} yield that for all $k\in \N^*$ and all $x\in \Gamma$, 
$$\left|\nabla_{\Gamma} u_k(x)\right|^2 \lesssim A [N_q u_k](x).$$
Therefore,
\[\begin{split}
\left\| \nabla_{\Gamma} u_k \,e^{c\left(\frac{d^m(.,y)}{k} \right)^\frac1{m-1}} \right\|_q  
& \lesssim  \left\| A [N_q u_k]\, e^{2c \left(\frac{d^m(.,y)}{k} \right)^\frac1{m-1}}\right\|_{q/2}^{1/2} \\
& \lesssim   \left\| A \left[N_q u_k \,e^{2c \left(\frac{[d(.,y)+1]^m}{k} \right)^\frac1{m-1}} \right]\right\|_{q/2}^{1/2}\\
& \lesssim  \left\| N_q u_k \,e^{2c\left(\frac{[d(.,y)+1]^m}{k} \right)^\frac1{m-1}} \right\|_{q/2}^{1/2},
\end{split} \]
where the last line holds because of the boundedness of the operator $A$ on $L^r$ for $r\in (0,1]$ (see for example \cite[Proposition 3.1]{Du08}).
Remark now that $[d+1]^{m/(m-1)} \leq 2^{1/(m-1)} [d^{m/(m-1)} + 1]$. Thus, if $c' = 2^{\frac{m}{m-1}} c$, we have
\begin{equation} \label{Step1} \left\| \nabla_{\Gamma} u_k e^{c\left(\frac{d^m(.,y)}{k} \right)^\frac1{m-1}} \right\|_q 
\lesssim \left\| N_q u_k e^{c' \left(\frac{d(.,y)^m}{k} \right)^\frac1{m-1}} \right\|_{q/2}^{1/2}. 
 \end{equation}

\medskip

\noindent \underline{Step 2:} A Hardy-Stein type identity. 

\noindent Let  $J_k = -[\partial_k + \Delta_{\Gamma}] u_k^q$.  Proposition 4.7 in \cite{Fen15} yields that $N_q u_k$  is non-negative, thus so is $J_k$.

\noindent Since  $u_k \in L^q(\Gamma)$,  that is  $u_k^q \in L^1(\Gamma)$,  we have  $\sum_{x\in \Gamma} \Delta_{\Gamma} u_k^q(x) m(x) = 0$.  
Therefore, with H\"older inequality,
\[\begin{split}
\sum_{x\in \Gamma} J_k(x) m(x) & = -\sum_{x\in \Gamma} \partial_k(u_k^q)(x) m(x) \\
& \leq - q \sum_{x\in \Gamma} u_k^{q-1}(x) \partial_k u_k(x) m(x)  \\
& \lesssim \|u_k\|_q^{q-1} \|\partial_k u_k\|_q = \|u_k\|_q^{q-1} \|\Delta_{\Gamma} u_k\|_q, 
\end{split}\]
where the second line is a consequence of  Young's inequality. 
As a consequence, \eqref{Step1} becomes
\begin{equation} \label{Step2}\begin{split}
 \left\| \nabla_{\Gamma} u_k e^{c\left(\frac{d^m(.,y)}{k} \right)^\frac1{m-1}} \right\|_q^q 
& \lesssim  \left\| N_q u_k e^{c' \left(\frac{d(.,y)^m}{k} \right)^\frac1{m-1}} \right\|_{q/2}^{q/2} \\
& \quad = \sum_{x\in \Gamma}  u_k(x)^{q(1-\frac{q}{2})} e^{\frac{c'q}{2} \left(\frac{d(x,y)^m}{k} \right)^\frac1{m-1}} J_k(x)^{q/2} m(x) \\
& \quad \leq  \left(\sum_{x\in \Gamma} u_k(x)^{q} e^{\frac{c'q}{2-q} \left(\frac{d(x,y)^m}{k} \right)^\frac1{m-1}} m(x) \right)^{1-\frac{q}2} \left(\sum_{x\in \Gamma} J_k(x) m(x) \right)^{q/2} \\
& \lesssim  \left(\sum_{x\in \Gamma} u_k(x)^{q} e^{\frac{c'q}{2-q} \left(\frac{d(x,y)^m}{k} \right)^\frac1{m-1}} m(x) \right)^{1-\frac{q}2}  \|u_k\|_q^{q(q-1)/2} \|\Delta_{\Gamma} u_k\|_q^{q/2}. 
\end{split}\end{equation}

\medskip

\noindent \underline{Step 3:} Conclusion.

\noindent The use of \eqref{UEm}, as well as the discrete version of \eqref{est2}, yields, if $c$ (and thus $c'$) is small enough,
\[\begin{split}
\sum_{x\in \Gamma}  u_k(x)^{q} e^{\frac{c'q}{2-q} \left(\frac{d(x,y)^m}{k} \right)^\frac1{m-1}} m(x) 
& \lesssim \frac{m^q(y)}{V(y,k^{1/m})^q}   \sum_{x\in \Gamma} e^{-c" \left(\frac{d(x,y)^m}{k} \right)^\frac1{m-1}} m(x) \\
& \lesssim  \frac{m^q(y)}{V(y,k^{1/m})^{q-1}}.
\end{split} \]
As a consequence, 
$$ \|u_k\|_q^q \lesssim \frac{m^q(y)}{V(y,k^{1/m})^{q-1}}.$$
Finally, let $l \simeq n \simeq \frac{k}{2}$  (recall that $k\geq 2$).  
Since \eqref{LB} implies the  $L^q$ analyticity of $\Delta_{\Gamma}$ (see \cite[p. 426]{CSC90}),  one has
$$\|\Delta_{\Gamma} u_k\|_q^q \lesssim \frac{1}{l^q} \| u_{n}\|_q^q \lesssim \frac{m^q(y)}{l^q V(y,n^{1/m})^{q-1}} \lesssim \frac{m^q(y)}{k^q V(y,k^{1/m})^{q-1}},$$
where the last inequality holds thank to the doubling property \eqref{doublingraph}.

Using the last three estimates in \eqref{Step2}, one has
$$\left\| \nabla_{\Gamma} u_k e^{c\left(\frac{d^m(.,y)}{k} \right)^\frac1{m-1}} \right\|_q^q  \lesssim  \frac{m^q(y)}{k^{q/2} V(y,k^{1/m})^{q-1}}, $$
which concludes the proof.
\end{proof}


\section{The reverse inequality} \label{Vicsek}

A consequence of Theorems \ref{main} and \ref{maingraph}  through the duality argument recalled in the introduction  is that \eqref{RRp} holds for all $p\in [2,+\infty)$ under the assumptions of these theorems.  In this section, we exhibit a class of manifolds (respectively of graphs) satisfying the assumptions of Theorem \ref{main} (respectively of Theorem \ref{maingraph}) on which \eqref{RRp} is false whenever $1<p<2$ (therefore, again by duality, \eqref{Rp}  is false for $2<p<+\infty$).

 Our model space will be the infinite Vicsek graph built in $\R^N$ whose first steps of construction are shown in Figure \ref{fig1}.
A Vicsek graph has polynomial volume growth of exponent  $D=\log_3 (1+2^N)$.
\begin{figure}
\begin{center}
\includegraphics{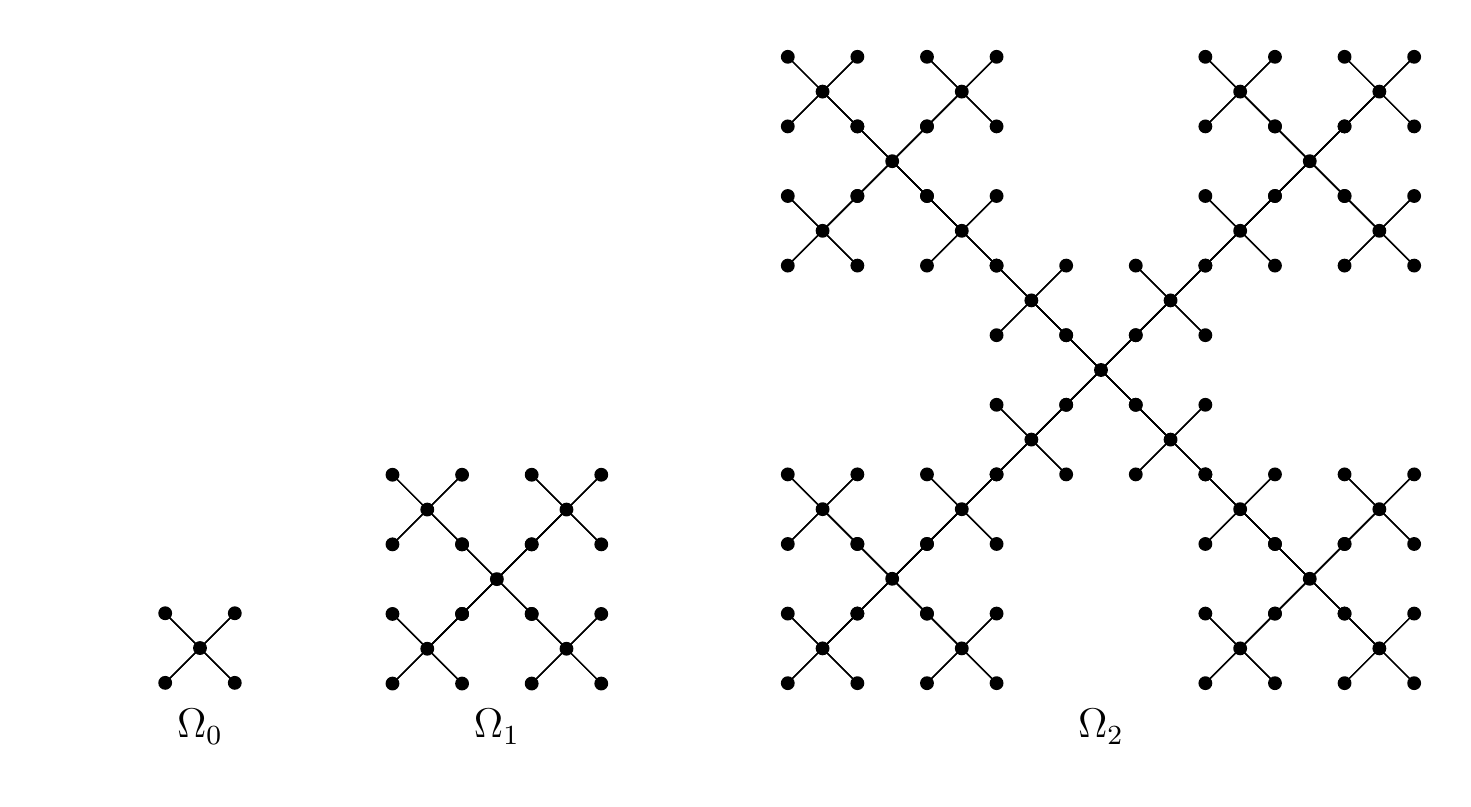}
\end{center}
\caption{The first steps of the Vicsek graph with parameter $D=\log_3 5$  built in $\R^2$.}\label{fig1}
\end{figure}

 As in \cite{BCG01}, we will consider the associated Vicsek manifold built by replacing edges by tubes.  

Such a  manifold $M$ has polynomial volume growth of exponent $D$ and satisfies the following heat kernel estimate (see \cite[Proposition 5.2]{BCK05}):
\[
h_t(x,y) \lesssim \frac{1}{t^{\frac{D}{D+1}}} \exp\br{-c\br{\frac{d^{D+1}(x,y)}{t}}^{1/D}},\,t \geq 1,
\]
in other words, it satisfies \eqref{B} with $m=D+1$.

Also, it satisfies the following non-standard Poincar\'e inequality 
\begin{equation}\label{reP}
\int_B |f-f_B|^2 d\mu \lesssim  r_B^{D+1} \int_B |\nabla f|^2 d\mu,\,\,\forall r_B\geq 1,\,\,\forall f \in \mathcal 
C_0^{\infty}(M),
\end{equation}
where $f_E$ denotes the average of $f$ on the set $E$ (\cite[Section 5]{CSC93}, \cite[Theorem 1.2]{BB04}). Note that \eqref{reP} is weaker than the $L^2$ Poincar\'e inequality:
\begin{equation}
\label{P}\tag{$P$}
\int_B |f-f_B|^2 d\mu \lesssim r_B^2 \int_B |\nabla f|^2 d\mu, \,\,\forall f \in \mathcal 
C_0^{\infty}(M),
\end{equation}
but it is optimal on $M$ (see  for example \cite[Remark 5.2]{ChThese}).

On Vicsek manifolds, Theorem \ref{main} tells us that (\ref{Rp})  holds  for $1< p\leq 2$. The following is a negative result for $p>2$.  It comes from  \cite[Th\'eor\`eme 0.30]{ChThese}. 
\begin{theorem}\label{neg}
Consider a Vicsek manifold $M$  with the polynomial volume growth $V(x,r) \simeq r^D$ for $r\geq 1$. Let $\beta\in (\frac{1}{D+1},\frac{D}{D+1})$. Then the inequality
\begin{equation}
\label{eRRp}
\Vert \Delta^{\beta} f\Vert_p \lesssim \Vert |\nabla f |\Vert_p,\,\,\forall f \in \mathcal 
C_0^{\infty}(M),
\end{equation}
is false for $1<p<\frac{D-1}{\beta(D+1)-1}$. 

In particular, choosing $\beta=1/2$, for all $p\in (1,2)$, \eqref{RRp} does not hold.
Consequently, the Riesz transform is not bounded on $L^p$ for any $2<p<+\infty$. 
\end{theorem}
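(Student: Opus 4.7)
My plan is to construct, for each $R \geq 1$, an explicit test function $\phi_R$ on which the putative inequality \eqref{eRRp} breaks down at the quantitative rate needed for the announced range of $p$. The functions $\phi_R$ will exploit the characteristic bottleneck geometry of the Vicsek manifold: at each scale $R$, there is a ``neck'' --- a tube of bounded cross-section and length of order $R$ --- whose removal separates a ball of radius $R$ into two components, each of volume $\sim R^D$. This is precisely the feature behind the non-standard Poincar\'e inequality \eqref{reP} with exponent $D+1$ in place of $2$.

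Concretely, I would let $\phi_R$ be a smooth function equal to $1$ on one such component, $0$ on the other, and varying linearly along the separating neck. Then $|\nabla \phi_R| \sim 1/R$ on the neck and vanishes elsewhere, and since the neck has volume $\sim R$ this gives
$$\||\nabla \phi_R|\|_p^p \lesssim R \cdot R^{-p}, \qquad \text{i.e.} \qquad \||\nabla \phi_R|\|_p \lesssim R^{(1-p)/p}.$$
At the same time, $\phi_R$ equals $1$ on a set of volume $\sim R^D$, so $\|\phi_R\|_p \sim R^{D/p}$.

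The key --- and genuinely technical --- step is to establish a matching lower bound of the form $\|\Delta^\beta \phi_R\|_p \gtrsim R^{-\beta(D+1)}\|\phi_R\|_p \sim R^{-\beta(D+1)+D/p}$. The natural approach is to use the subordination identity
$$\Delta^\beta \phi_R = c_\beta \int_0^\infty t^{-\beta-1}\bigl(\phi_R - e^{-t\Delta}\phi_R\bigr)\,dt$$
and to identify $t\simeq R^{D+1}=R^m$ as the critical time. For $t\ll R^m$, a sharp sub-Gaussian \emph{lower} heat kernel bound on Vicsek manifolds (which is known to hold by the general theory of fractal manifolds) prevents mass from diffusing through the neck, so $e^{-t\Delta}\phi_R$ coincides with $\phi_R$ up to a small error on the side where $\phi_R=1$; for $t\gg R^m$, the semigroup has spread the mass to a set of volume $\sim t^{D/m}\gg R^D$, so that $\phi_R - e^{-t\Delta}\phi_R$ has size comparable to $\phi_R$. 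Isolating the time window $t\simeq R^m$ in the integral then yields $\|\Delta^\beta \phi_R\|_p \gtrsim R^{-\beta m}\|\phi_R\|_p$.

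Combining the two estimates gives
$$\frac{\|\Delta^\beta \phi_R\|_p}{\||\nabla \phi_R|\|_p} \gtrsim R^{1 + (D-1)/p - \beta(D+1)},$$
and a direct computation shows that the exponent is strictly positive exactly when $p < (D-1)/(\beta(D+1)-1)$, so the ratio blows up as $R\to\infty$, contradicting \eqref{eRRp}. The main obstacle will be making the lower bound on $\|\Delta^\beta \phi_R\|_p$ rigorous: the upper estimates of Section~\ref{integratestim} go in the wrong direction and one must really appeal to the sharp sub-Gaussian \emph{lower} heat kernel bound on Vicsek manifolds --- equivalently, to the optimality of \eqref{reP}. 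Specialising to $\beta = 1/2$ gives the stated range $p \in (1,2)$, and the failure of \eqref{Rp} on $L^{p'}$ for every $p' \in (2,+\infty)$ is then immediate from the duality argument recalled in the introduction.
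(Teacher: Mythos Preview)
Your test-function strategy and final exponent computation are correct, and the core idea --- exploit the bottleneck geometry to make $\||\nabla\phi_R|\|_p$ anomalously small while $\|\Delta^\beta\phi_R\|_p$ stays large --- is exactly the one in the paper. The execution, however, differs. The paper does not estimate $\|\Delta^\beta f\|_p$ from below directly; instead it uses the on-diagonal bound $h_t(x,x)\lesssim t^{-D/(D+1)}$ to derive a Nash-type inequality $\|f\|_p^{1+\theta}\lesssim \|f\|_1^{\theta}\|\Delta^\beta f\|_p$, then assumes \eqref{eRRp}, transfers the resulting gradient Nash inequality to the underlying Vicsek \emph{graph} via rough isometry, and disproves it by plugging in explicit harmonic test functions $g_n$ on the graph. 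This route is cleaner because the lower bound on $\|\Delta^\beta f\|_p$ is hidden inside a standard functional-analytic fact, and all explicit computations happen on the graph where they are trivial.

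Your direct subordination route also works, but your diagnosis of the ``main obstacle'' is off: you do \emph{not} need sub-Gaussian lower heat kernel bounds or the optimality of \eqref{reP}. On the set $A=\{\phi_R=1\}$ (of volume $\simeq R^D$) the integrand $t^{-\beta-1}\bigl(1-e^{-t\Delta}\phi_R(x)\bigr)$ is pointwise non-negative because $0\le\phi_R\le 1$. Hence you may simply drop $t\le CR^{D+1}$ and use only the on-diagonal \emph{upper} estimate $\sup_y h_t(x,y)\lesssim t^{-D/(D+1)}$ together with $\|\phi_R\|_1\lesssim R^D$ to get $e^{-t\Delta}\phi_R(x)\le 1/2$ for $t\ge CR^{D+1}$, whence $\Delta^\beta\phi_R(x)\gtrsim R^{-\beta(D+1)}$ on $A$. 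So both approaches rest on the same single input, the on-diagonal upper bound; the paper packages it as a Nash inequality, you unpack it pointwise.
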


A similar statement holds for  Vicsek graphs.

\begin{remark}
\begin{enumerate}

\item  The first part of the following proof is taken from \cite[Proposition 6.2]{CD03}, where the authors proved that (\ref{RRp}) is false for $p\in (1, \frac{2D}{D+1})$. We will rewrite it for the sake of completeness. Note that the conclusion of \cite[Proposition 6.2]{CD03} is weaker that the one of Theorem \ref{neg}. 

\item Recall that Auscher and the second author proved in \cite{AC05}  that under  (\ref{doubling}) and (\ref{P})  the Riesz transform is bounded on $L^p$ for $2<p<2+\varepsilon$, where $\varepsilon>0$. But of course the Vicsek manifold does not satisfy \eqref{P} (as we already said, the non-standard Poincar\'e inequality \eqref{reP} is optimal on $M$). By contrast, Theorem \ref{neg} shows that the conjunction of \eqref{doubling} and \eqref{reP} does not imply the existence of $\varepsilon>0$ such that \eqref{Rp} holds for $p\in (2,2+\varepsilon)$.
\end{enumerate}
\end{remark}

\begin{proof}  Consider a Vicsek manifold $M$ modelled on a Vicsek graph $\Gamma$ built in $\R^N$ with polynomial volume growth of exponent $D$.
Recall (\cite[Section 6]{BCG01}) that $M$ satisfies
\[
h_t(x,x) \lesssim t^{-\frac{D}{D+1}},\,t \geq 1, \ x\in M.
\]
Let $D'=\frac{2D}{D+1}$. Therefore, we have for $p>1$
\[
\Vert e^{-t\Delta} \Vert_{1 \rightarrow p} \lesssim t^{-\frac{D'}{2}\left( 1-\frac{1}{p}\right)}, \,t \geq 1.
\]
This implies the following Nash inequality (see \cite{Co92}): for all $\beta>0$, it holds
\[
\Vert f\Vert_p^{1+\frac{2 \beta p}{(p-1)D'}} 
\lesssim  \Vert f \Vert_1^{2 \beta \frac{p}{(p-1)D'}} \Vert \Delta^{\beta} f \Vert_p,\,
\forall f \in \mathcal{C}_0^{\infty}(M) \text{ such that } \frac{\Vert f \Vert_p}{\Vert f \Vert_1} \leq 1.
\]

Let $\beta\in (\frac{1}{D+1},\frac{D}{D+1})$ and assume that (\ref{eRRp}) holds. It follows 
\begin{equation}
\label{nash}
\Vert f\Vert_p^{1+\frac{2\beta p}{(p-1)D'}} 
\lesssim  \Vert f \Vert_1^{\frac{2\beta p}{(p-1)D'}} \Vert |\nabla f |\Vert_p,\,
\forall f \in \mathcal{C}_0^{\infty}(M) \text{ such that } \frac{\Vert f \Vert_p}{\Vert f \Vert_1} \leq 1.
\end{equation}

The Vicsek graph $\Gamma$ is endowed with  the standard weight $m$ (that is, $\mu_{xy}=1$ if $x,y$ are neighbours, $0$ otherwise,
hence $m(x)$ is nothing but the number of neighbours of $x$) out of which $M$ is constructed by replacing the edges with tubes. For any vertex set $\Omega \subset \Gamma$, $m(\Omega)\simeq |\Omega|$, where $|\Omega|$ is the cardinality of $\Omega$. We learn from \cite[Section 6]{CSC95}  that (\ref{nash}) implies the following analogue of \eqref{nash} on $\Gamma$: 

\begin{equation}
\label{gnash}
\Vert g\Vert_p^{1+\frac{2\beta p}{(p-1)D'}} 
\lesssim  \Vert g \Vert_1^{\frac{2\beta p}{(p-1)D'}} \Vert \nabla_{\Gamma} g \Vert_p, \,  \forall \,g\in c_0(\Gamma).
\end{equation}

We will show that there exists a family of functions that disproves (\ref{gnash}) hence  \eqref{eRRp}.

Indeed, take the same $\Omega_n$ and $g_n$ as in \cite[Section 4]{BCG01} (see Figure \ref{fig2}). That is: 
$\Omega_n=\Gamma \bigcap [0,3^n]^N$, where $2^N+1=3^D$. Then $|\Omega_n| \simeq 3^{D n}$. Denote by $z_0$ 
the center of $\Omega_n$ and by $z_i , 1\leq i \leq 2^N$ its corners. Note that $d(z_0,z_i)=3^n$. Define  $g_n$ as follows: $g_n(z_0) = 1, g_n(z_i) = 0,  1\leq i \leq 2^N$, and extend  $g_n$ 
as a harmonic function in the rest of $\Omega_n$.  More precisely, if $z$ belongs to the diagonal linking $z_0$ and $z_i$, then $g_n(z)=3^{-n}d(z_i,z)$.  Otherwise, $g_n(z)=g_n(z')$  where $z'$ is the only vertex satisfying $d(z,z') = \inf\{d(z,y), \ y \text{ belongs to a diagonal of $\Omega_n$}\}$.

\begin{figure}
\begin{center}
\includegraphics{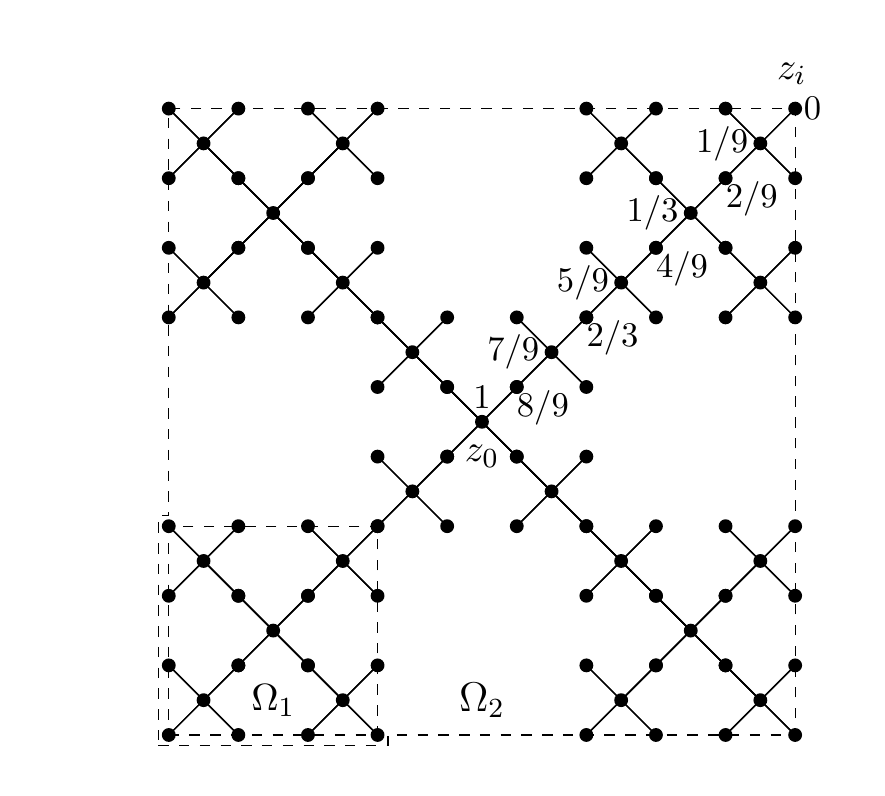}
\end{center}
\caption{the function $g_2$ on the diagonal $z_0 z_i$} \label{fig2}
\end{figure}

On the one hand, we have 
\[
\sum_{x\in \Gamma} |g_n(x)| m(x)\leq m(\Omega_{n}) \simeq |\Omega_{n}|. 
\] 
On the other hand, for any $x$ in the $n-1$  block with centre $z_0$, we have   $g_n(x) \geq \frac{2}{3}$. 

\noindent Therefore
\[
\sum_{x\in \Gamma} |g_n(x)|^p m(x)  \geq (2/3)^p m(\Omega_{n-1}) \simeq 3^{D n} \simeq |\Omega_{n}|.
\]

Since, whenever $x\sim y$,  $|g_n(x)-g_n(y)|  = 3^{-n}$ if $x, y$ belong to the same diagonal connecting $z_0$ and $z_i$,
and otherwise  $g_n(x)-g_n(y)= 0$, we obtain
\[
\Vert \nabla_\Gamma g_n\Vert_{p}^p \simeq \sum_{x\sim y}|g_n(x)-g_n(y)|^p
= \sum_{i=1}^{2^N} 3^{-np} d(z_0,z_i)=2^N 3^{-n(p-1)} \simeq |\Omega_n|^{-\frac{p-1}{D}}.
\]
Thus (\ref{gnash}) yields
\begin{eqnarray}
\label{exp}
|\Omega_n|^{\frac{1}{p}\left(1+\frac{2\beta p}{(p-1)D'}\right)}
\lesssim |\Omega_n|^{\frac{2\beta p}{(p-1)D'}-\frac{1}{p'D}}.
\end{eqnarray}

Obviously $|\Omega_n|$ tends to infinity as  $n\rightarrow \infty$, hence \eqref{exp} cannot hold if the RHS exponent is smaller than the LHS one, that is if
\begin{eqnarray*}
\frac{2\beta p}{(p-1)D'}-\frac{1}{p'D} - \frac{1}{p}\left(1+ \frac{2\beta p}{(p-1)D'}\right)
& = & \left(1-\frac{1}{p}\right)\frac{2\beta p}{(p-1)D'} - \frac{1}{p} - \frac{1}{p'D}
\\ & = & \beta\frac{D+1}{D} - \frac{1}{p} - \frac{1}{p'D}
<0.
\end{eqnarray*}

In other words, (\ref{exp}) is false for $1<p<\frac{D-1}{\beta(D+1)-1}$, where 
$\beta\in (\frac{1}{D+1},\frac{D}{D+1})$. This contradicts our assumption (\ref{RRp}) on $\Gamma$ and $M$.

In particular,  taking $\beta=1/2$ shows that   (\ref{exp}) is false for $1<p<2$. Thus (\ref{RRp}) is false for  $1<p<2$. This also 
indicates that the Riesz transform is not bounded on $L^{p}$ for $2<p< +\infty $. As we already said, the statement on (\ref{Rp}) follows  by duality.
\end{proof}

 Combining Theorem \ref{main} and Theorem \ref{neg}, we have a full picture for the comparison of $\norm{|\nabla f|}_p$ and $\norm{\Delta^{1/2} f}_p$ on Vicsek manifolds and graphs. That is
\begin{theorem}
For any Vicsek manifold, and  $p\in(1,+\infty)$,  \eqref{Rp} holds if and only if $p\le 2$ and \eqref{RRp} holds if and only if $p\ge 2$.
\end{theorem}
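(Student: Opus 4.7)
The plan is to assemble the final theorem as a direct consequence of the results already established, verifying in each direction that the hypotheses of the appropriate earlier theorem are met. First I would check that any Vicsek manifold $M$ fits into the framework of Theorem \ref{main}: the polynomial growth $V(x,r)\simeq r^D$ for $r\geq 1$ (together with the local Euclidean-type volume behavior coming from the tubular construction) gives \eqref{doubling}, and the sub-Gaussian upper bound \eqref{B} with exponent $m=D+1>2$ is recorded in the discussion preceding Theorem \ref{neg} (via \cite[Proposition 5.2]{BCK05}). Consequently Theorem \ref{main} applies and yields \eqref{Rp} for every $p\in(1,2]$; the case $p=2$ is of course already contained in the identity \eqref{2}.

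Next I would handle the range $p\geq 2$ for \eqref{RRp} by the duality argument recalled in the introduction (\cite[Proposition 2.1]{CD03}): since \eqref{Rp} has been established for all $p\in(1,2]$, its dual statement gives \eqref{RRp} for all $p\in[2,+\infty)$. Again $p=2$ is covered by \eqref{2}. This takes care of the two ``positive'' halves of the biconditional.

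For the two ``negative'' halves, I would invoke Theorem \ref{neg} with $\beta=1/2$: it asserts directly that \eqref{RRp} fails on $M$ for every $p\in(1,2)$, and its final sentence also gives the failure of \eqref{Rp} for every $p\in(2,+\infty)$ (which one can equivalently re-derive by duality from the failure of \eqref{RRp} on $(1,2)$). Combining the four pieces yields the stated equivalences.

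There is essentially no obstacle left at this stage: all the analytic work has already been done in Theorem \ref{main} (positive $L^p$-boundedness of the Riesz transform under doubling plus sub-Gaussian bounds) and in Theorem \ref{neg} (the Nash-inequality/capacity obstruction on Vicsek structures that rules out \eqref{RRp} below $2$). The only mild point worth spelling out is that the Vicsek manifold genuinely satisfies both \eqref{doubling} and \eqref{B} with $m=D+1$, so that Theorem \ref{main} is applicable; everything else is a bookkeeping combination of those two theorems with the duality $\eqref{Rp}\Longleftrightarrow\eqref{RRp'}$ and the trivial case $p=2$.
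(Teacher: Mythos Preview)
Your proposal is correct and matches the paper's approach exactly: the theorem is stated as an immediate combination of Theorem \ref{main} (giving \eqref{Rp} for $1<p\le 2$), duality (giving \eqref{RRp} for $p\ge 2$), and Theorem \ref{neg} (ruling out \eqref{RRp} for $1<p<2$ and hence \eqref{Rp} for $p>2$). The paper does not even write a separate proof; it simply says the result follows by combining those two theorems, which is precisely what you have spelled out.
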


A similar statement holds for  Vicsek graphs.

\appendix

\section*{Appendix: Estimates for the heat kernel in fractal manifolds}

Let $(\Gamma,\mu)$ be a graph as in Section \ref{Rieszgraph}. For all $A\subset \Gamma$, let $\partial A$ denote the exterior boundary of $A$, defined as $\left\{x\in \Gamma\setminus A;\mbox{ there exists }y\sim x\mbox{ with }y\in A\right\}$, and let $\overline{A}:=A\cup \partial A$. \par
\noindent A function $h:\overline{A}\rightarrow \R$ is said to be harmonic in $A$ if and only if, for all $x\in A$, $\Delta h(x)=0$. \par
\noindent Say that $(\Gamma,\mu)$ satisfies a Harnack elliptic inequality if and only if, for all $x\in \Gamma$, all 
$R\geq 1$ and all nonnegative harmonic functions $u$ in $B(x,2R)$,  
\begin{equation} \label{EHI} \tag{$EHI$}
\sup_{B(x,R)} u\lesssim \inf_{B(x,R)} u.
\end{equation}
Let $D>0$. Say that $\Gamma$ satisfies $(V_{D})$ if and only if, for all $x\in \Gamma$ and all $r>0$,
\begin{equation} \label{Ahlfors} \tag{$V_{D}$}
V(x,r)\sim r^{D}.
\end{equation}
Denote by $(X_n)_{n\geq 1}$ a random walk on $\Gamma$, that is a Markov chain with transition probability given by $p$. For all $A\subset \Gamma$, define
$$
T_A:=\min\left\{n\geq 1;\ X_n\in A\right\}
$$
and, for all $x\in \Gamma$ and all $r>0$, let
$$
\tau_{x,r}:=T_{B^c(x,r)}.
$$
If $m>0$, say that $\Gamma$ satisfies \eqref{exit} if and only if, for all $x\in \Gamma$ and all $r>0$,
\begin{equation} \label{exit} \tag{$E_{m}$}
\E^x \tau_{x,r}\sim r^{m}.
\end{equation}
Theorem 2 in \cite{Ba04} claims that, for all $m\in [2,D+1]$, there exists a graph $\Gamma$ satisfying \eqref{Ahlfors} (hence the doubling volume property), \eqref{EHI} and \eqref{exit}. Therefore, Theorem 2.15 in \cite{BBK06} (see also \cite[Theorem 3.1]{gt}) implies that $\Gamma$ satisfies the following parabolic Harnack inequality: for all $x_0\in \Gamma$, all $R\geq 1$ and all non-negative functions $u:\llbracket 0,4N\rrbracket\times \overline{B(x_0,2R)}$ solving $u_{n+1}-u_n=\Delta u_n$ in $\llbracket 0,4N-1\rrbracket\times B(x_0,2R)$, one has
$$
\max_{n\in \llbracket N,2N-1\rrbracket,\ y\in B(x,R)} u_n(y)\lesssim \min _{n\in \llbracket 3N,4N-1\rrbracket,\ y\in B(x,R)} (u_n(y)+u_{n+1}(y)),
$$
where $N$ is an integer satisfying $N\sim R^{m}$ and $N\geq 2R$. \par
\noindent Consider now the manifold $M$ built from $\Gamma$ with a self-similar structure at infinity by replacing the edges of the graph with tubes of length $1$ and then gluing the tubes together smoothly
at the vertices. Since $M$ and $\Gamma$ are roughly isometric, Theorem 2.21 in \cite{BBK06} yields that $M$ satisfies the following parabolic Harnack inequality: for all $x_0\in M$ and all $R>0$, for all non-negative solutions $u$ of $\partial_tu=\Delta u$ in $(0,4R^{m})\times B(x_0,R)$, one has
$$
\sup_{Q_-}u\lesssim \inf_{Q_+}u,
$$
where
$$
Q_-=(R^{m},2R^{m})\times B(x_0,R)
$$
and
$$
Q_+=(3R^{m},4R^{m})\times B(x_0,R).
$$
In turn the parabolic Harnack inequality implies \eqref{B} (see \cite[Theorem 5.3]{HSC01}). See also \cite[Section 1.2.7]{Post}.

Finally, let us explain why   \eqref{B}  for $m>2$ is incompatible with \eqref{UE}. It is classical (see for instance \cite[Section 3]{coupre}) that \eqref{B} together with \eqref{doubling} implies the on-diagonal lower bound
$$h_{t}(x,x) \gtrsim  \frac{1}{V(x,t^{1/m})}, \ t\geq 1,$$
which is clearly incompatible with 
$$h_{t}(x,x) \lesssim  \frac{1}{V(x,t^{1/2})}, \ t\geq 1$$
because of the so-called reverse volume doubling property (see for instance \cite[Proposition 5.2]{GH14}).

\end{document}